\newcommand*\MSC[1][1991]{\par\leavevmode\hbox{%
\textit{\,\,\,\,\, #1 Mathematical subject classification:\ }}}
\newcommand\blfootnote[1]{%
  \begingroup
  \renewcommand\thefootnote{}\footnote{#1}%
  \addtocounter{footnote}{-1}%
  \endgroup
}
\def \phi {\varphi}
\def \R {\mathbb{R}}
\def \G{\Gamma}
\newcommand{\Rn}{\mathbb R^n}
\newcommand{\Hn}{\mathbb H^n}
\newcommand{\p}{\partial}
\newcommand{\bG}{\mathbb {G}}
\newcommand{\bg}{\mathfrak g}
\newcommand{\la}{\lambda}
\numberwithin{equation}{section}
\newcommand{\beq}{\begin{equation}}
\newcommand{\bea}[1]{\begin{array}{#1} }
\newcommand{\eeq}{ \end{equation}}
\newcommand{\ea}{ \end{array}}
\newcommand{\ve}{\varepsilon}
\newcommand{\Lp}{L^p}
\newtheorem{theorem}{Theorem}[section]
\newtheorem{lemma}[theorem]{Lemma}
\newtheorem{corollary}[theorem]{Corollary}
\numberwithin{equation}{section}
\begin{document}

\title{A heat equation approach to intertwining}

{\blfootnote{\MSC[2020]{35K08, 35R11, 53C18}}}

\keywords{Nonlocal intertwining. Parabolic extension problems. CR conformal invariance}

\date{}

\begin{abstract}
In this paper we present a new approach based on the heat equation and extension problems to some intertwining formulas arising in conformal CR geometry.   
\end{abstract}

\author{Nicola Garofalo}

\address{Dipartimento d'Ingegneria Civile e Ambientale (DICEA)\\ Universit\`a di Padova\\ Via Marzolo, 9 - 35131 Padova,  Italy}
\vskip 0.2in
\email{nicola.garofalo@unipd.it}

\thanks{Both authors are supported in part by a Progetto SID: ``Non-local Sobolev and isoperimetric inequalities", University of Padova, 2019.}

\author{Giulio Tralli}
\address{Dipartimento d'Ingegneria Civile e Ambientale (DICEA)\\ Universit\`a di Padova\\ Via Marzolo, 9 - 35131 Padova,  Italy}
\vskip 0.2in
\email{giulio.tralli@unipd.it}

\maketitle

\section{Introduction}\label{S:intro} 

In this paper we present a heat semigroup approach to some intertwining formulas which arise in conformal CR geometry. The prototypical example of the questions we are interested in can be described as follows. In $\Rn$ with $n\geq 2$, for $0<s<1$ consider the pseudodifferential operator which in Fourier transform is given by $\widehat{(-\Delta)^s u}(\xi) = (2\pi |\xi|)^{2s} \hat u(\xi)$,
see \cite{R, St}. Then, for every $x\in\R^n$, and each fixed $y>0$, one has the following intertwining relation
\begin{equation}\label{Eu}
\left(-\Delta\right)^s\left((|x|^2 + y^2)^{-\frac{n- 2s}2}\right)= \frac{\G\left(\frac{n}{2}+s\right)}{\G\left(\frac{n}{2}-s\right)}  \left(2 y\right)^{2s} (|x|^2 + y^2)^{-\frac{n+ 2s}2}. 
\end{equation}
The objective of this note is to use the heat equation to establish a suitable variant  of \eqref{Eu} in which $(-\Delta)^s$ is replaced  by the nonlocal conformal horizontal Laplacian $\mathscr L_s$ on a Lie group of Heisenberg type $\bG$. It is worth mentioning here that our approach is  new even in the classical case of \eqref{Eu}.

To provide the reader with some historical perspective we recall that  \eqref{Eu} implicitly appeared in the celebrated 1983 work of Lieb concerning the best constants in the Hardy-Littlewood-Sobolev inequalities, see (3.12), (3.13) in the proof of \cite[Theorem 3.1]{L}. In this connection we observe that a notable consequence of \eqref{Eu} is that it provides a family of positive solutions to the nonlocal Yamabe equation in $\R^n$ 
\begin{equation}\label{yams}
\left(-\Delta\right)^su= u^{\frac{n+2s}{n-2s}}.
\end{equation} 
The uniqueness of such positive solutions was subsequently proved by Chen, Li, and Ou in \cite{CLO}. A different way of looking at \eqref{Eu}, which also underscores its geometric invariance, goes back to the ideas in \cite{B, Bsur}. For a function $f$ on $\Rn$ denote by $f^\star$ its push forward to $\mathbb S^n\subset \R^{n+1}$ through the stereographic projection. Branson characterized the conformal pseudodifferential operators $A_s$ of order $2s$ on $\mathbb S^n$ satisfying the intertwining formula
\begin{equation}\label{branson3}
A_{s}\left(\left(\left(\frac{1+|\cdot|^2}2\right)^{\frac n2 - s} f\right)^\star\right) = \left(\left(\frac{1+|\cdot|^2}2\right)^{\frac n2 + s} (-\Delta)^s f\right)^\star,
\end{equation}
and proved that they have the following spectral representation
\begin{equation}\label{branson}
A_{s} = \frac{\G(\sqrt{-\Delta_{\mathbb S^n} + \frac{(n-1)^2}{4}} + s + \frac 12)}{\G(\sqrt{-\Delta_{\mathbb S^n} + \frac{(n-1)^2}{4}} - s + \frac 12)}.
\end{equation}
The number $A_{s}(1) = \frac{\G(\frac n2 + s)}{\G(\frac n2 - s)}$ is called, up to a renormalising constant, the fractional $Q_s$-curvature of $\mathbb S^n$. For more insights on these aspects we refer the reader to \cite{BJ, CG, dmG} .
We note that from \eqref{branson3} it is immediate to recognise the validity of \eqref{Eu}. It suffices to take $f(x) = \big(\frac{1+|x|^2}2\big)^{s-\frac n2}$, and observe that
\[
(-\Delta)^s \big(\frac{2}{1+|x|^2}\big)^{\frac n2-s} = \big(\frac2{1+|x|^2}\big)^{\frac n2 + s} \big(A_{s}(1^\star)\big)^\star = \big(\frac2{1+|x|^2}\big)^{\frac n2 + s} A_{s}(1) = \frac{\G(\frac n2 + s)}{\G(\frac n2 - s)} \big(\frac2{1+|x|^2}\big)^{\frac n2 + s}.
\] 

In our main result, Theorem \ref{CHrevisited} below, we present a new approach to intertwining formulas such as \eqref{Eu} which is exclusively based on the heat equation. 
To explain the main ideas without delving into technical aspects we mention that, instead of looking at $(-\Delta)^s$, we consider the nonlocal heat operator $(\p_t - \Delta)^s$ and its \emph{extension problem}: given a function $u\in C^\infty_0(\Rn_x\times \R_t)$, find $U\in C^\infty(\Rn_x\times \R_t\times \R^+_y)$ such that
\begin{equation}\label{exths0}
\begin{cases}
\mathfrak P^{(s)} U \overset{def}{=} \frac{\p^2 U}{\p y^2}  + \frac{1-2s}y \frac{\p U}{\p y} + \Delta_x U - \frac{\p U}{\p t} = 0,
\\ 
U(x,t,0) = u(x,t).
\end{cases}
\end{equation}
We claim that the fundamental solution $q^{(s)}(x,y,t)$ of the operator $\mathfrak P^{(s)}$ in \eqref{exths0} can be used to provide a simple proof of \eqref{Eu}.  
To see this we note that, if $w\in \R^{2(1-s)}$ and $y = |w|$, then $\mathfrak P^{(s)}$ represents the action on functions $U(x,w,t) = \overline U(x,y,t)$ of the heat operator $\Delta_x +\Delta_w  - \p_t$ in the space with fractal dimension $\R^{n+2(1- s)}\times  \R^+_t$ whose fundamental solution (with pole at the origin) is given by
\begin{equation}\label{Euqs}
 q^{(s)}(x,y,t) = \frac{1}{(4\pi t)^{\frac{n}{2}+1- s}}e^{-\frac{|x|^2+y^2}{4t}}.
\end{equation}
If we denote by $q^{(-s)}(x,y,t)$ the heat kernel obtained by replacing $s$ into $-s$ in \eqref{Euqs}, then by Bochner's subordination the two functions  
\begin{equation}\label{Es}
E^{(\pm s)}(x,y)\overset{def}{=}\int_0^{\infty} q^{(\pm s)}(x,y,t) dt
\end{equation}
are the fundamental solutions of the time-independent differential operators $\frac{\p^2}{\p y^2}  + \frac{1\mp 2s}y \frac{\p}{\p y} + \Delta_x$. If we now use the elementary (but very important) consequence of \eqref{Euqs} that
\begin{equation}\label{stic}
\int_0^{\infty} q^{(\pm s)}(x,y,t) dt = \frac{\G(\frac{n\mp 2s}2)}{4 \pi^{\frac n2+1 \mp s}} (|x|^2 + y^2)^{-\frac{n\mp 2s}2},
\end{equation}
then in view of \eqref{Es} the problem of proving \eqref{Eu} is  reduced to that of establishing the equivalent dimension-free relation
\begin{equation}\label{Eunodim}
\left(-\Delta\right)^s\left(E^{(s)}(\cdot,y)\right)(x)=\left(2\pi y\right)^{2s} E^{(-s)}(x,y).
\end{equation}
To prove \eqref{Eunodim} we once again bring to the center stage the heat semigroup $P_t = e^{-t\Delta}$. If, in fact, instead of the Fourier transform definition of $(-\Delta)^s$ we use the (equivalent) one given by the following formula of Balakrishnan \cite{Bala},
\begin{equation}\label{bala}
(-\Delta)^s u(x) = -\frac{s}{\G(1-s)} \int_0^\infty \frac{1}{t^{1+s}} (P_t u(x) - u(x)) dt,
\end{equation}
then the proof of \eqref{Eunodim} hinges on the computation of $P_t (E^{(s)}(\cdot,y))(x)$, with $E^{(s)}$ defined by \eqref{Es}. As we show in \eqref{chko} below and subsequent considerations, the Chapman-Kolmogorov identity and an elementary change of variable  allow to easily complete this computation and establish \eqref{Eunodim}. 
By comparison, a direct proof of \eqref{Eu} via Fourier transform rests on elaborate computations involving special functions which tend to overshadow the geometric content of the formula itself (for complete details see \cite[Lemma 8.6]{Gft}).

The pseudodifferential operators $(-\Delta)^s$ and  \eqref{branson} have a counterpart in CR geometry. In this context the Heisenberg group $\Hn \cong \mathbb C^{n}\times\mathbb R$ with coordinates $(z,\sigma)$\footnote{we explicitly mention here that traditionally the letter $t$ is reserved for the vertical variable in $\Hn$. However, since we want to indicate the time variable with $t$, we have opted for the notation $\sigma$.} occupies a special position since, via the Cayley transform, such group has a conformal identification with the boundary of the unit ball in $\mathbb C^{n+1}$, the sphere $\mathbb S^{2n+1}$ with its standard CR structure. Analogously to the term $(|x|^2+1)^{-1}$ in the stereographic projection, the function $|i+ (4\sigma + i|z|^2)|^{-2} = ((|z|^2+1)^2+16\sigma^2)^{-1}$ appears as the conformal factor in the Cayley transform, see \cite[Section 4]{JL}, and in fact appropriate powers of such factor played a central role in the works of Jerison and Lee \cite{JL2} and Frank and Lieb \cite{FL}. In $\Hn$, with $T = \p_\sigma$, the CR conformal nonlocal operator $\mathscr L_s$ was first introduced in the work by Branson et al. \cite[see (1.33)]{BFM} via the spectral formula 
\begin{equation}\label{bfm}
\mathscr L_s = 2^s |T|^s \frac{\G(-\frac 12\mathscr L |T|^{-1} + \frac{1+s}2)}{\G(-\frac 12 \mathscr L |T|^{-1} + \frac{1-s}2)},\ \ \ \ \ \ \ 0<s<1,
\end{equation}
where we have denoted by $\mathscr L$ the Kohn-Spencer horizontal Laplacian in $\Hn$.
We note that, when $0<s<1$ the pseudodifferential operator $\mathscr L_s$ is dramatically different from $\mathscr L^s = (-\mathscr L)^s$ (for the definition of this operator see \eqref{balaL} below), and these two operators coincide only in the local case $s=1$. In fact, by formally letting $s\nearrow 1$ in \eqref{bfm}, and using $\G(x+1) = x\G(x)$, we obtain $\mathscr L_1 = -\mathscr L = \mathscr L^1$.
In their work \cite{FGMT} Frank et al. introduced a new extension problem for the nonlocal operator \eqref{bfm}, very different from that for its non-geometric counterpart $\mathscr L^s$ in \cite{FF} (based on the work of Caffarelli and Silvestre \cite{CS}), and used scattering theory to solve it. Subsequently, Roncal and Thangavelu employed a parabolic version of the extension problem in \cite{FGMT} to establish some optimal Hardy inequalities in $\Hn$ \cite{RTaim},  or more in general in groups of Heisenberg type  \cite{RT}. Such time-dependent extension problem plays an important role in our recent work \cite{GTfeel}, as well as in the present paper which can be seen as a continuation of such work.

To introduce the relevant geometric framework we recall that Lie groups of Heisenberg type were introduced by Kaplan \cite{Ka} in connection with hypoellipticity questions. They are geometrically interesting since on one hand they retain most of the important symmetries of $\Hn$, on the other they naturally arise as the nilpotent component $N$ in the Iwasawa decomposition $\bG = KAN$ of a simple group of rank one, see \cite{CDKR}.
In a group of Heisenberg type $\bG$ (see Section \ref{S:confi} for the relevant definitions) the following generalisation of \eqref{bfm} was introduced in \cite{RT}. Let $\mathfrak g = V_1\oplus V_2$ be the Lie algebra of $\bG$, and denote $m = \operatorname{dim} V_1$, $k = \operatorname{dim} V_2$ (we note that the complex structure of $\bG$ forces $m=2n$ for some $n\in \mathbb N$). We routinely identify $\mathfrak g$ with $\R^m\times\R^k$, and the generic point $g\in \bG$ with its logarithmic coordinates $(z,\sigma)\in \R^m\times\R^k$. Let $\mathscr L$ be a given horizontal Laplacian in $\bG$ associated with an orthonormal basis of the horizontal layer $V_1$. Consider the pseudo-differential operator of order $2s$ defined by 
\begin{equation}\label{bfmH}
\mathscr L_s = 2^s (-\Delta_\sigma)^{s/2} \frac{\G(-\frac 12\mathscr L (-\Delta_\sigma)^{-1/2} + \frac{1+s}2)}{\G(-\frac 12 \mathscr L (-\Delta_\sigma)^{-1/2} + \frac{1-s}2)},\ \ \ \ \ \ 0<s<1.
\end{equation}
We note that for $\Hn$ the dimension of the vertical layer is $k=1$ and  $(-\Delta_\sigma)^{s/2}$ in \eqref{bfmH} becomes $|T|^s$, thus giving back \eqref{bfm}. 
In a group of Heisenberg type $\bG$ the counterpart of the intertwining formula \eqref{Eu} is given by the following 
\begin{align}\label{Gnoneu}
& \mathscr L_s\left(((|z|^2+y^2)^2+16|\sigma|^2)^{-\frac{m + 2k - 2s}4}\right) 
\\
&=\frac{\G\left(\frac{m+2+2s}{4}\right)\G\left(\frac{m+2k+2s}{4}\right)}{\G\left(\frac{m+2-2s}{4}\right)\G\left(\frac{m+2k-2s}{4}\right)} (4y)^{2s}((|z|^2+y^2)^2+16|\sigma|^2)^{-\frac{m + 2k + 2s}4},
\notag
\end{align}
for $(z,\sigma)\in\bG$, and $y>0$. We stress that in the particular case of the Heisenberg group $\mathbb{H}^n$ (which corresponds to the case $m = 2n$ and $k=1$) the function appearing in the left-hand side of \eqref{Gnoneu} defines, up to group translations, the unique extremal of the Hardy-Littlewood-Sobolev inequalities obtained by Frank and Lieb in \cite{FL}. We also note that, similarly to \eqref{yams} above, a remarkable by-product of \eqref{Gnoneu} is that up to a renormalising factor the function $u(z,\sigma) = ((|z|^2+y^2)^2+16|\sigma|^2)^{-\frac{Q - 2s}4}$ (where $Q=m+2k$ denotes the so-called homogeneous dimension of the group $\bG$) provides a positive solution to the following nonlocal Yamabe equation
\begin{equation}\label{cryf}
\mathscr L_s u = u^{\frac{Q+2s}{Q-2s}}.
\end{equation}
Following \cite{FGMT}, one can consider semilinear equations as \eqref{cryf} in a suitable class of CR manifolds having $\mathbb{H}^n$ as (flat-)model case: proving existence of positive solutions in this framework would in fact resolve the fractional CR Yamabe problem posed in \cite[p. 103-104]{FGMT}. We also mention \cite{GMM, Kr} for multiplicity results for solutions of \eqref{cryf} with unrestricted sign in $\mathbb{H}^n$.

In connection with \eqref{Gnoneu} we recall that the existence of intertwining operators in semisimple Lie groups is known since the pioneering work \cite{KS} by Knapp and Stein. A systematic treatment of intertwining kernels in  Lie groups of Heisenberg type was developed by Cowling in \cite{Co} and subsequently by Cowling and Haagerup in \cite{CH}. 
Formula \eqref{Gnoneu} was recently proved by Roncal and Thangavelu in \cite[Theorem 3.1]{RTaim} and \cite[Theorem 3.7]{RT}, where they used it to find a M. Riesz type inverse of the operator $\mathscr L_s$. Their approach relies on non-commutative Fourier analysis and group representation theory, and it is inspired to the results in \cite[Section 3]{CH}.

Our approach to the CR intertwining formula \eqref{Gnoneu}  is inspired to the above described strategy leading to \eqref{Eunodim}. While we refer the reader to Section \ref{S:confi} for the relevant details and a description of the background results from \cite{FGMT}, \cite{RTaim}, \cite{GTfeel} and \cite{GTstep2}, here we mention that the main step is to consider the parabolic extension problem referred to above:
given a function $u\in C^\infty_0(\bG\times \R_t)$, find a function $U\in C^\infty(\bG\times  \R_t \times \R^+_y)$ such that
\begin{equation}\label{parext0}
\begin{cases}
\mathfrak P_{(s)} U \overset{def}{=} \frac{\p^2 U}{\p y^2}  + \frac{1-2s}y \frac{\p U}{\p y} + \frac{y^2}4 \Delta_\sigma U+ \mathscr L U - \frac{\p U}{\p t} = 0,\ \ \ \ \ \text{in}\ \bG\times  \R_t \times \R^+_y,
\\
U(g,t,0) = u(g,t).
\end{cases}
\end{equation}
In our recent work \cite{GTfeel} we  have proved that the fundamental solution (with pole at the origin) of the operator $\mathfrak P_{(s)}$ in \eqref{parext0} is the function in the thick space $\bG\times \R^{2(1- s)} \times \R^+$
given by 
\begin{align}\label{parBfs0}
 q_{(s)}((z,\sigma),t,y) & =  \frac{2^k}{(4\pi t)^{\frac{m}2 +k +1-s}} \int_{\R^k} e^{- \frac it \langle \sigma,\la\rangle}   \left(\frac{|\la|}{\sinh |\la|}\right)^{\frac m2+1-s} e^{-\frac{|z|^2 +y^2}{4t}\frac{|\la|}{\tanh |\la|}} d\la.
\end{align}
Similarly to the case of $\Rn$, the conformal CR invariants of the formula \eqref{Gnoneu} are embedded in the fundamental solution $q_{(s)}((z,\sigma),t,y)$ in \eqref{parBfs0}. This claim will follow from two basic results stated below: Theorem \ref{CHrevisited} and Theorem \hyperref[A]{A}. The former is the main result in this paper, the latter represents a counterpart of \eqref{stic} and was proved in \cite[Theorem 1.4]{GTfeel}. From \eqref{parBfs0} and Bochner's principle of subordination we know that the distribution
\begin{equation}\label{frake0}
\mathfrak e_{(s)}((z,\sigma,y) \overset{def}{=} \int_0^\infty q_{(s)}((z,\sigma),t,y) dt
\end{equation}
is the fundamental solution with pole at the origin of the time-independent part of $\mathfrak P_{(s)}$, i.e., the conformal extension operator
$\mathfrak L_{(s)} = \frac{\p^2}{\p y^2} + \frac{1-2s}{y} \frac{\p }{\p y} + \frac{y^2}4 \Delta_\sigma + \mathscr L.
$
The reader should bear in mind that $\mathfrak e_{(s)}((z,\sigma,y)$ is the CR counterpart of the function $E^{(s)}(x,y)$ in  \eqref{Es} above.  Along with $\mathfrak e_{(s)}((z,\sigma),y)$, we consider the distribution $\mathfrak e_{(-s)}((z,\sigma),y)$ obtained by changing $s$ into $-s$ in  \eqref{parBfs0} and \eqref{frake0}. We are finally ready to state our main result.
 
\begin{theorem}[Geometric intertwining]\label{CHrevisited}
Let $\bG$ be a group of Heisenberg type and let $s\in (0,1)$. For every $g\in\bG$ and $y>0$ one has
\begin{equation}\label{confCH}
\mathscr L_s (\mathfrak e_{(s)}(\cdot,y))(g) = (2\pi y)^{2s} \mathfrak e_{(- s)}(g,y).
\end{equation}
\end{theorem}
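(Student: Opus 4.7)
My plan is to follow the heat-semigroup template described in the introduction for the Euclidean identity \eqref{Eunodim}, with the parabolic extension kernel $q_{(s)}$ of \eqref{parBfs0} playing the role of the Gaussian kernel $q^{(s)}$ of \eqref{Euqs}. First, I would invoke a Balakrishnan/Bochner-subordination formula for $\mathscr L_s$ of the shape
\[
\mathscr L_s u(g) = -\frac{s}{\Gamma(1-s)}\int_0^\infty \tau^{-1-s}\bigl(\mathcal Q_\tau u(g) - u(g)\bigr)\, d\tau,
\]
for a suitable Markov semigroup $\{\mathcal Q_\tau\}_{\tau>0}$ on $\bG$ naturally attached to the conformally invariant extension problem \eqref{parext0}, in the spirit of the developments of \cite{GTfeel, GTstep2}.

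I would then insert $u(\cdot) = \mathfrak e_{(s)}(\cdot, y) = \int_0^\infty q_{(s)}(\cdot, t, y)\, dt$ into this formula, exchange the order of integration (permissible for $g$ away from the origin by the Gaussian-type decay of $q_{(s)}$), and reduce the problem to computing the action of $\mathcal Q_\tau$ on $q_{(s)}(\cdot, t, y)$. The decisive step is a Chapman--Kolmogorov-type identity that expresses $\mathcal Q_\tau q_{(s)}(\cdot, t, y)(g)$ as an explicit expression in $(g, t+\tau, y)$, mirroring the Euclidean relation $P_\tau q^{(s)}(\cdot, y, t)(x) = g_t(y)\, p_{t+\tau}(x)$ sketched in the introduction. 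The key enabling feature in the Heisenberg-type setting is that in formula \eqref{parBfs0} the variables $|z|$ and $y$ appear only through the combination $|z|^2 + y^2$ multiplied by $|\lambda|/\tanh|\lambda|$, so $y$ effectively behaves like $2(1-s)$ additional horizontal dimensions attached to $z$.

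Substituting back produces a double integral in $(t,\tau)$; performing the change of variable $t' = t+\tau$ (or an analogous scaling, exactly as in the Euclidean argument) decouples the integrals. The $\tau$-integration yields, via a standard Beta/Gamma-type evaluation, the prefactor $(2\pi y)^{2s}$, while the remaining $t'$-integral, after identifying the transformed kernel as $q_{(-s)}$ (that is, $q_{(s)}$ with $s$ replaced by $-s$), produces $\mathfrak e_{(-s)}(g, y)$ by \eqref{frake0}.

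The principal technical obstacle is the Chapman--Kolmogorov identity of the second step. In the Euclidean case the kernel $q^{(s)}$ factors as a product $p_t(x)\, g_t(y)$ of classical Gaussians in $x$ and $y$, and the semigroup action $P_\tau p_t = p_{t+\tau}$ on the $x$-factor is elementary. In the Heisenberg-type case the kernel \eqref{parBfs0} carries a $\lambda$-integral entangling $z$, $\sigma$ and $y$ through the group's noncommutative structure, and the extension operator \eqref{parext0} couples $y$ to $\sigma$ via the term $\frac{y^2}{4}\Delta_\sigma$. Identifying the correct semigroup $\{\mathcal Q_\tau\}$ and verifying that its action on $q_{(s)}$ reduces to a pure time-shift with an explicit scalar prefactor is the heart of the argument, and ultimately rests on the CR conformal invariance of the extension problem \eqref{parext0}.
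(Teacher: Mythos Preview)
Your outline is the template for the \emph{non-geometric} Theorem~\ref{CHnon}, and the paper explicitly warns that its two load-bearing assumptions fail in the conformal setting. First, there is no Markov semigroup $\{\mathcal Q_\tau\}$ realising $\mathscr L_s$ via a Balakrishnan formula. The representation the paper actually uses is~\eqref{RTs}, built from the family $P_{(-s),t}$ with kernels $\mathscr K_{(-s)}$ of~\eqref{conn}--\eqref{poissono}, and these operators do \emph{not} form a semigroup: $P_{(-s),t}\circ P_{(-s),\tau}\neq P_{(-s),t+\tau}$. Second, and more damagingly, the convolution you need does not reduce to a time-shift with a scalar prefactor. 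Lemma~\ref{kernallazzamentoy} computes the group convolution of $q_{(-s)}(\cdot,t,0)$ with $q_{(s)}(\cdot,\tau,y)$: the result carries the three separate factors $\bigl(|\lambda|/\sinh 2\pi t|\lambda|\bigr)^{1+s}$, $\bigl(|\lambda|/\sinh 2\pi\tau|\lambda|\bigr)^{1-s}$, and $e^{-\frac\pi2 y^2|\lambda|/\tanh 2\pi\tau|\lambda|}$, which depend on $t$ and $\tau$ individually, not only through $t+\tau$. In particular the $y$-exponent sits at the inner time $\tau$, because $P_{(-s),t}$ averages only over the group variables and leaves the $y$-slot frozen; so your observation that $|z|^2+y^2$ appears as a block in~\eqref{parBfs0} does not survive the convolution, and the substitution $t'=t+\tau$ cannot decouple the integrals.

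The paper closes this gap with ingredients your plan does not anticipate. After the change $(v,\rho)=(t+\tau,\tau/t)$ and an integration by parts in $v$, the residual $\rho$-integral still mixes $\sinh\frac{\rho}{1+\rho}\mu$, $\sinh\frac{\mu}{1+\rho}$, and the $y$-exponential in a non-trivial way. Two devices are needed to collapse it: an auxiliary function $h_{s,\mu}(\rho)$ imported from~\cite{GTfeel} whose derivative identity~\eqref{derhsm} absorbs part of the integrand, and the explicit evaluation of Lemma~\ref{L:cowboy}, which is what finally produces the factor $(2\pi y)^{2s}$ and the exponent $e^{-\frac\pi2 y^2|\lambda|/\tanh 2\pi v|\lambda|}$ at the \emph{summed} time $v$. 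These replace the one-line Beta/Gamma step you expect; without them the argument does not close.
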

Theorem \ref{CHrevisited} is the CR counterpart of the dimension-free identity \eqref{Eunodim}. It is worth mentioning here that in our proof of \eqref{confCH} we do not use the definition
\eqref{bfmH} of $\mathscr L_s$ since the latter would immediately lead into the elaborate machinery of non-commutative Fourier analysis and group representation theory. Instead, we base our analysis on the equivalent representation \eqref{RTs} below which is only formally similar to \eqref{bala}.

With Theorem \ref{CHrevisited} in hands, we combine it with the following result, which is \cite[Theorem 1.4]{GTfeel}, that further underscores the geometric relevance of the functions $\mathfrak e_{(\pm s)}((z,\sigma),y)$.

\medskip

\noindent \textbf{Theorem A.}\label{A}\  
\emph{Let $0<s\le 1$. In any group of Heisenberg type $\bG$ the distribution in the thick space $\bG\times \R^+_y$ defined by \eqref{frake0} is given by
\begin{equation}\label{qmasomeno0}
\mathfrak e_{(s)}((z,\sigma),y) = \frac{\G(s)}{(4\pi)^{1-s}} C_{(s)}(m,k)\ ((|z|^2+y^2)^2+16|\sigma|^2)^{-\frac 12(\frac m2 + k - s)},
\end{equation}
where we have let
\begin{equation}\label{C}
C_{(s)}(m,k) = \frac{2^{\frac m2 + 2k-3s-1} \G(\frac 12(\frac m2+1-s)) \G(\frac 12(\frac m2 + k -s))}{\pi^{\frac{m+k+1}2} \G(s)}.
\end{equation}
An equation similar to \eqref{qmasomeno0} holds if we replace $s$ with $-s$, provided that $\G(s)$ is replaced by $|\G(-s)|$}.
\medskip

It should now be clear to the reader that the combination of Theorem \ref{CHrevisited} and Theorem \hyperref[A]{A} immediately implies the intertwining formula \eqref{Gnoneu}, see Corollary \ref{C:chsect} below.

In closing, we provide a brief description of the paper. In Section \ref{S:nci} we present a fairly general non-geometric version of the intertwining  phenomenon that goes well beyond the setting of Lie groups of Heisenberg type. We warn the reader that, although on a formal level the statement  of Theorem  \ref{CHnon} seems similar that of Theorem \ref{CHrevisited}, the geometric content of the latter result is lost in the former.  Our intent is to illustrate the flexibility of the heat equation approach which allows to treat situations in which the arsenal of Fourier analysis is not readily available. For this we have chosen the framework of stratified nilpotent Lie groups (aka Carnot groups) of arbitrary step, but the proof of Theorem  \ref{CHnon} holds with no changes for the fractional powers of the infinitesimal generator of a Dirichlet form under by now standard assumptions which guarantee Gaussian estimates for the relevant heat semigroup. Section \ref{S:confi} is devoted to proving Theorem  \ref{CHrevisited}. We begin by explaining in some detail the geometric extension problem \eqref{parext0}, and we introduce the two operators $P_{(\pm s),t}$. We highlight the new difficulties that one encounters with respect to the non-geometric case and we establish two preliminary results of an elementary character, Lemma \ref{kernallazzamentoy} and  Lemma \ref{L:cowboy}, that serve to overcome such obstructions. Finally, we prove Theorem  \ref{CHrevisited}. In Corollary \ref{C:chsect}, which closes the paper, we combine this result with Theorem \hyperref[A]{A}  to establish \eqref{Gnoneu}.


\section{Non-conformal intertwining formula}\label{S:nci}

In this section we establish in the setting of stratified nilpotent Lie groups of arbitrary steps (aka Carnot groups) a general version of the intertwining formula \eqref{Eunodim}, see Theorem \ref{CHnon} below. On one hand, our purpose is to show that the heat equation approach outlined in the case of flat $\Rn$ in the introduction works in great generality. On the other hand, with the present section we intend to highlight the dramatic differences between the geometric and the non-geometric operators $\mathscr L_s$ and $\mathscr L^s$. To introduce our discussion we recall that a Carnot group of step $r$ is a simply-connected Lie group $\bG$ whose Lie algebra $\bg$ is stratified and $r$-nilpotent. This means that $\bg = V_1 \oplus ... \oplus V_r$, with $[V_1,V_i] = V_{i+1}$, $i=1,...,r-1$, and $[V_1,V_r] = \{0\}$. The bracket generating layer $V_1$ of $\bg$ is called the horizontal layer. If we denote by $L_g(g') = g\circ g'$ the operator of left-translation on $\bG$, and define left-invariant vector fields $X_j(g) = d L_g(e_j)$, where $\{e_1,...,e_m\}$ is an orthonormal basis of $V_1$, then a horizontal Laplacian in $\bG$ (associated with the basis $\{e_1,...,e_m\}$) is defined by $\mathscr L = \sum_{j=1}^m X_j^2$. Such differential operator fails to be elliptic at every point $g\in \bG$, but according to H\"ormander's theorem in \cite{Ho} it is hypoelliptic in $\bG$. In \cite{Fo75} Folland proved the existence of a strictly positive kernel $p(g,g',t)$, $C^\infty$ off the diagonal, such that the heat semigroup $P_t = e^{-t\mathscr L}$ is represented by $P_t f(g) = \int_{\bG} p(g,g',t) f(g') dg'$. Such semigroup is stochastically complete ($P_1 1 = 1$) and a contraction on $\Lp(\bG)$ for every $1\le p\le \infty$. Furthermore, the kernel $p(g,g',t)$ satisfies the Gaussian estimates
\begin{equation}\label{ge}
C t^{-Q/2} e^{-\alpha \frac{d(g,g')^2}{t}}\le p(g,g',t) \le C^{-1} t^{-Q/2} e^{-\beta \frac{d(g,g')^2}{t}},
\end{equation}
for appropriate universal constants $C, \alpha, \beta>0$. In \eqref{ge} we have indicated with $Q = \sum_{j=1}^r j \operatorname{dim} V_j$ the homogeneous dimension of $\bG$ with respect to the anisotropic dilations associated with the grading of the Lie algebra, whereas $d(g,g')$ denotes the control distance, see \cite{Fo75, VSC}.
The semigroup $P_t$ is all that is needed to introduce the fractional powers $\mathscr L^s = (-\mathscr L)^s$ by means of the following formula due to Balakrishnan 
\begin{equation}\label{balaL}
\mathscr L^s u(g) = -\frac{s}{\G(1-s)} \int_0^\infty \frac{1}{t^{1+s}} (P_t u(g) - u(g)) dt.
\end{equation}
 Next, we consider the parabolic extension problem for the pseudodifferential operator $(\p_t - \mathscr L)^s$, see \cite{Gejde}: given a function $u\in C^\infty_0(\bG\times \R_t)$, find $U\in C^\infty(\bG\times \R_t\times \R^+_y)$ such that
$$
\begin{cases}
\mathfrak P^{(s)} U \overset{def}{=} \frac{\p^2 U}{\p y^2}  + \frac{1-2s}y \frac{\p U}{\p y} + \mathscr L U - \frac{\p U}{\p t} = 0,
\\ 
U(g,t,0) = u(g,t).
\end{cases}
$$
Since we can think of the differential operator $\mathfrak P^{(s)}$ as a heat operator on the group $\bG\times \R^{2(1-s)}_w\times \R^+_t$ acting on functions $U(g,w,t) = \overline U(g,y,t)$, with $y = |w|$, its fundamental solution with pole at the identity is given by
\begin{equation}\label{defq}
 q^{(s)}(g,t,y)  =   \frac{1}{(4\pi t)^{1- s}}e^{-\frac{y^2}{4t}} p(g,e,t),
\end{equation}
where we have denote by $e\in \bG$ the identity element. We stress that the function $q^{(s)}(g,t,y) $ in \eqref{defq} is dramatically different from its geometric counterpart 
 $q_{(s)}((z,\sigma),t,y)$ in \eqref{parBfs0} above which is not a product. 
Hereafter, we indicate with the $q^{(-s)}(g,t,y)$ the function obtained by changing $s$ into $-s$ in \eqref{defq} (see \eqref{qmenos} below), and we define
\begin{equation}\label{defe}
\mathfrak e^{(\pm s)}(g,y) \overset{def}{=} \int_0^\infty q^{(\pm s)}(g,t,y) dt.
\end{equation}
It should be clear to the reader that the function $\mathfrak e^{(s)}$ in \eqref{defe} is the fundamental solution (with pole at the identity) of the time-independent extension operator $\frac{\p^2}{\p y^2}  + \frac{1-2s}y \frac{\p}{\p y} + \mathscr L$.
We have the following result.

\begin{theorem}[Non-geometric intertwining]\label{CHnon}
Let $\bG$ be a Carnot group and $s\in (0,1)$. For any $g\in\bG$ and $y>0$ one has
\begin{equation}\label{intrallnongeom}
\mathscr L^s (\mathfrak e^{(s)}(\cdot,y))(g) = (2\pi y)^{2s} \mathfrak e^{(- s)}(g,y).
\end{equation}
\end{theorem}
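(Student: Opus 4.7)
My approach is to reduce the intertwining identity to a one–dimensional fractional–calculus computation, using Balakrishnan's formula \eqref{balaL} together with the Chapman–Kolmogorov (semigroup) property of $P_t$ on the heat kernel $p(\cdot,e,\tau)$. To this end, introduce the auxiliary function
$$\phi(\tau) := (4\pi\tau)^{s-1} e^{-y^2/(4\tau)} \quad \text{for } \tau>0, \qquad \phi(\tau):=0 \quad \text{for } \tau\le 0,$$
so that, by \eqref{defq} and \eqref{defe}, $\mathfrak e^{(s)}(g,y) = \int_\R \phi(\tau)\, p(g,e,\tau)\, d\tau$ (with the understanding $p(g,e,\tau)=0$ for $\tau\le 0$). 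Since $P_t$ acts only in the $g$-variable, the semigroup property $P_t\, p(\cdot,e,\tau)(g) = p(g,e,t+\tau)$ together with the substitution $u=t+\tau$ gives
$$P_t\, \mathfrak e^{(s)}(\cdot,y)(g) = \int_{\R} \phi(u-t)\, p(g,e,u)\, du,$$
and hence, after feeding this into \eqref{balaL},
$$\mathscr L^s \mathfrak e^{(s)}(\cdot,y)(g) = -\frac{s}{\G(1-s)}\int_0^\infty t^{-1-s} \int_0^\infty [\phi(u-t)-\phi(u)]\, p(g,e,u)\, du\, dt.$$

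\textbf{Exchange of order and reduction to a pointwise identity.} The Gaussian bounds \eqref{ge} on $p(\cdot,e,\tau)$ combined with the super–exponential decay of $\phi$ as $\tau\to 0^+$ and its algebraic behavior as $\tau\to\infty$ ensure that, upon grouping the difference $\phi(u-t)-\phi(u)$ to cancel the singularity of $t^{-1-s}$ at $t=0$, the double integrand is absolutely integrable. Fubini's theorem then yields
$$\mathscr L^s \mathfrak e^{(s)}(\cdot,y)(g) = \int_0^\infty p(g,e,u)\; \mathcal D^s\phi(u)\, du,$$
where $\mathcal D^s\phi(u) := -\frac{s}{\G(1-s)}\int_0^\infty t^{-1-s}[\phi(u-t)-\phi(u)]\, dt$ is the Marchaud fractional derivative. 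Because $\phi$ vanishes on $(-\infty,0]$, $\mathcal D^s\phi$ coincides with the Riemann–Liouville derivative
$$\mathcal D^s\phi(u) = \frac{1}{\G(1-s)}\frac{d}{du}\int_0^u (u-\tau)^{-s}\phi(\tau)\, d\tau.$$
Thus Theorem \ref{CHnon} will follow once I establish the $g$-independent pointwise identity
\begin{equation*}
\mathcal D^s\phi(u) = (2\pi y)^{2s}\, \frac{e^{-y^2/(4u)}}{(4\pi u)^{1+s}}, \qquad u>0,
\end{equation*}
for then the remaining integral in $u$ is exactly $(2\pi y)^{2s}\,\mathfrak e^{(-s)}(g,y)$.

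\textbf{The key one–dimensional computation and main obstacle.} To verify the above pointwise identity I would first substitute $\tau=u\sigma$ in the RL integral, which produces
$$\int_0^u (u-\tau)^{-s}\tau^{s-1} e^{-y^2/(4\tau)}\, d\tau = \int_0^1 (1-\sigma)^{-s}\sigma^{s-1} e^{-y^2/(4u\sigma)}\, d\sigma.$$
Differentiating in $u$ pulls out $y^2/(4u^2)$ and shifts the $\sigma$ exponent by one; the substitutions $r=1/\sigma$ followed by $v=r-1$ reduce the resulting integral to
$$\int_0^\infty v^{-s} e^{-y^2 v/(4u)}\, dv\cdot e^{-y^2/(4u)} = \G(1-s)\left(\tfrac{4u}{y^2}\right)^{1-s} e^{-y^2/(4u)},$$
by the standard Gamma integral. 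Collecting the prefactors $(4\pi)^{s-1}/\G(1-s)$ and simplifying $(y^2/4)^s(4\pi)^{s-1}\cdot 4 = (2\pi)^{2s}(4\pi)^{-1-s}\cdot (4\pi)^{2} $ — or, more cleanly, checking $(4\pi)^{s-1}4^{-s}=(2\pi)^{2s}(4\pi)^{-1-s}$ — yields exactly the claimed formula. The main obstacle is the rigorous justification of the Fubini swap, since the Balakrishnan integrand cannot be split term-by-term (the individual integral $\int_0^\infty t^{-1-s}dt$ diverges). Keeping the difference $\phi(u-t)-\phi(u)$ together provides an $O(t^{1-s})$ bound near $t=0$ and an integrable decay at infinity, which combined with \eqref{ge} delivers absolute integrability; the subsequent fractional-calculus identity is then an explicit and elementary computation.
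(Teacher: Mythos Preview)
Your proof is correct and the key one–dimensional identity checks out (indeed $(4\pi)^{s-1}4^{-s}=4^{-1}\pi^{s-1}=(2\pi)^{2s}(4\pi)^{-1-s}$, so the prefactors match). Both your argument and the paper's start from Balakrishnan's representation and use the Chapman--Kolmogorov identity to turn $P_t\,\mathfrak e^{(s)}(\cdot,y)(g)$ into an integral of $p(g,e,t+\tau)$ against the Bessel--heat profile $(4\pi\tau)^{s-1}e^{-y^2/(4\tau)}$; from there the routes diverge. The paper first integrates by parts in \eqref{balaL} to get the form $-\frac{1}{\Gamma(1-s)}\int_0^\infty t^{-s}\partial_t P_t u\,dt$, and then performs the two--dimensional change of variables $(t,\tau)\mapsto(v,\rho)=(t+\tau,\tau/t)$, after which the $v$--integral becomes a total derivative whose boundary terms vanish and the remaining $\rho$--integral is a straightforward Gamma evaluation. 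You instead keep the original difference quotient, shift $\tau\to u=t+\tau$, swap the order of integration, and recognize the inner $t$--integral as the Marchaud/Riemann--Liouville fractional derivative $\mathcal D^s\phi(u)$ of the explicit one--variable function $\phi(\tau)=(4\pi\tau)^{s-1}e^{-y^2/(4\tau)}$. Your route has the advantage of making the $g$--independence and the purely one--dimensional nature of the computation completely transparent, and it connects the result to a classical fractional--calculus identity; the paper's route is more self--contained in that it avoids invoking the Marchaud--Riemann--Liouville equivalence and handles everything with an explicit change of variables. The two $\rho$-- and $v$--integrals you and the paper ultimately compute are the same Gamma integral in disguise.
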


\begin{proof}
First, we observe that if $u$ is a $C^\infty(\bG)$ function which suitably decays at infinity, then
the estimates \eqref{ge} allow to integrate by parts in \eqref{balaL} obtaining 
\begin{equation}\label{bala2}
\mathscr L^s u(g) = - \frac{s}{\G(1-s)} \int_0^\infty \frac{1}{\tau^{1+s}} \int_0^\tau \p_t P_t u(g) dt d\tau = - \frac{1}{\G(1-s)} \int_0^\infty t^{-s} \p_t P_t u(g) dt.
\end{equation}
If we apply $P_t$ to $u(g) = \mathfrak e^{(s)}(g,y)$, and we use \eqref{defe}, we find
\begin{align}\label{chko}
P_t \mathfrak e^{(s)}(g,y)& =  \int_{\bG} p(g,g',t) \int_0^\infty \frac{1}{(4\pi \tau)^{1- s}}e^{-\frac{y^2}{4\tau}} p(g',0,\tau) d\tau dg'
\\
& = \int_0^\infty \frac{1}{(4\pi \tau)^{1- s}}e^{-\frac{y^2}{4\tau}} \int_{\bG} p(g,g',t)p(g',e,\tau) dg' d\tau 
\notag
\\
& = \int_0^\infty \frac{1}{(4\pi \tau)^{1- s}}e^{-\frac{y^2}{4\tau}} p(g,e,t+\tau) d\tau,  
\notag
\end{align}
where in the last equality we have used the Chapman-Kolmogorov equation (semigroup property for $P_t$)
\[
p(g,g'',t+\tau) = \int_{\bG} p(g,g',t) p(g',g'',\tau) dg' \quad\mbox{ for }g''\in\bG.
\]
Substituting this formula in \eqref{bala2}, we find 
\begin{align*}
\mathscr L^s(\mathfrak e^{(s)}(\cdot,y))(g)  & = - \frac{(4\pi)^{(s-1)}}{\G(1-s)} \int_0^\infty   \int_0^\infty \left(\frac{\tau}t\right)^{s} \p_t\left( e^{-\frac{y^2}{4\tau}} p(g,e,t+\tau) \right) \frac{d\tau}\tau dt.
\end{align*}
The change of variable
\begin{equation}\label{change}
(v,\rho)= \left(t + \tau,\frac{\tau}t\right),\qquad\qquad \text{or}\qquad\qquad (t,\tau) = \left(\frac{v}{1+\rho},\frac{v\rho}{1+\rho}\right),
\end{equation}
for which $dt d\tau = \frac{v}{(1+\rho)^2} dv d\rho$ and $ \p_t = \p_v - \frac{\rho(1+\rho)}{v}\p_\rho$, now gives
$$\mathscr L^s (\mathfrak e^{(s)}(\cdot,y))(g) = \frac{-(4\pi)^{s-1}}{\G(1-s)} \int_0^\infty\int_0^\infty \frac{\rho^{s-1}}{1+\rho}\left(\frac{\partial}{\partial v} - \frac{\rho(1+\rho)}{v}\frac{\partial}{\partial\rho}\right)\left(e^{-\frac{y^2}{4v}\frac{1+\rho}{\rho}} p(g,0,v) \right) dv d\rho.$$
Observing that, as a function of $v$, $e^{-\frac{y^2}{4v}\frac{1+\rho}{\rho}} p(g,0,v)$ tends to $0$ both as $v\to 0^+$ and $v\to \infty$ thanks to \eqref{ge}, we deduce that
\begin{align*}
& \mathscr L^s (\mathfrak e^{(s)}(\cdot,y))(g) = \frac{(4\pi)^{s-1}}{\G(1-s)} \int_0^\infty\int_0^\infty \frac{\rho^{s}}{v}\frac{\partial}{\partial\rho}\left(e^{-\frac{y^2}{4\rho v}}e^{-\frac{y^2}{4v}} p(g,0,v) \right) dv d\rho \\
&=\frac{(4\pi)^{s}}{\G(1-s)} \int_0^\infty \frac{1}{4\pi v}e^{-\frac{y^2}{4v}} p(g,0,v)\left( \int_0^\infty \rho^{s}\frac{\partial}{\partial\rho}\left(e^{-\frac{y^2}{4\rho v}}\right) d\rho\right) dv \\
&= \frac{(4\pi)^{s}}{\G(1-s)} \int_0^\infty \frac{1}{4\pi v}e^{-\frac{y^2}{4v}} p(g,0,v)\left(\frac{y^2}{4v} \int_0^\infty \rho^{s-1}e^{-\frac{y^2}{4\rho v}} \frac{d\rho}{\rho}\right) dv.
\end{align*}
Finally, keeping in mind that
\[
\frac{y^2}{4v} \int_0^\infty \rho^{s-1}e^{-\frac{y^2}{4\rho v}} \frac{d\rho}{\rho}= \frac{\G(1-s)}{(4v)^{s}} y^{2s},
\]
and recalling
\begin{equation}\label{qmenos}
q^{(-s)}(g,v,y)  =   \frac{1}{(4\pi v)^{1+ s}}e^{-\frac{y^2}{4v}} p(g,e,v),
\end{equation}
we reach the desired conclusion \eqref{intrallnongeom}.
\end{proof}

In the Abelian case when $\bG = \Rn$, the functions $\mathfrak e^{(\pm s)}(g,y)$ coincide with $E^{(\pm s)}(x,y)$ in \eqref{Es}. However, we warn the reader that for a general non-Abelian group $\bG$ the conformal significance of formulas \eqref{Euqs} and \eqref{stic} is lost.  We close this section by recording that, as an immediate consequence of Theorem \ref{CHnon} and formulas \eqref{Euqs}-\eqref{stic}, we have the following.

\begin{corollary}\label{C:eu}
In $\Rn$, with $n\geq 2$, we have for $0<s<1$,
$$
\left(-\Delta\right)^s\left((|x|^2 + y^2)^{-\frac{n- 2s}2}\right)= \frac{\G\left(\frac{n}{2}+s\right)}{\G\left(\frac{n}{2}-s\right)}  \left(2 y\right)^{2s} (|x|^2 + y^2)^{-\frac{n+ 2s}2},\qquad x\in\R^n,\ y>0.
$$
\end{corollary}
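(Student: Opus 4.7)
The plan is to deduce Corollary \ref{C:eu} by specializing Theorem \ref{CHnon} to the Abelian Carnot group $\bG = \R^n$ and then unpacking both sides of \eqref{intrallnongeom} using the explicit Gaussian heat kernel. In this case $\mathscr L$ is the standard Laplacian $\Delta$ on $\R^n$, so $\mathscr L^s = (-\Delta)^s$, and the heat kernel with pole at the origin is $p(x,0,t) = (4\pi t)^{-n/2} e^{-|x|^2/(4t)}$. Inserting this into the definition \eqref{defq} I would get
\begin{equation*}
q^{(s)}(x,t,y) = \frac{1}{(4\pi t)^{1-s}} e^{-\frac{y^2}{4t}} \cdot \frac{1}{(4\pi t)^{n/2}} e^{-\frac{|x|^2}{4t}} = \frac{1}{(4\pi t)^{\frac n2 + 1 - s}} e^{-\frac{|x|^2+y^2}{4t}},
\end{equation*}
which is precisely the fractal-dimension heat kernel $q^{(s)}(x,y,t)$ of \eqref{Euqs}. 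The analogous identity holds with $s$ replaced by $-s$.

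Next I would integrate in $t$ to obtain the time-independent fundamental solutions $\mathfrak e^{(\pm s)}(x,y)$ of \eqref{defe}. This is exactly the computation \eqref{stic}, which gives
\begin{equation*}
\mathfrak e^{(\pm s)}(x,y) = \int_0^\infty q^{(\pm s)}(x,t,y)\, dt = \frac{\G(\frac{n \mp 2s}{2})}{4\pi^{\frac n2 + 1 \mp s}}\, (|x|^2+y^2)^{-\frac{n \mp 2s}{2}}.
\end{equation*}

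With these two closed forms in hand, I would plug them into the intertwining identity \eqref{intrallnongeom} of Theorem \ref{CHnon}: the left-hand side becomes
\begin{equation*}
(-\Delta)^s(\mathfrak e^{(s)}(\cdot,y))(x) = \frac{\G(\frac{n-2s}{2})}{4\pi^{\frac n2 + 1 - s}}\,(-\Delta)^s\bigl((|x|^2+y^2)^{-\frac{n-2s}{2}}\bigr),
\end{equation*}
while the right-hand side becomes
\begin{equation*}
(2\pi y)^{2s}\,\frac{\G(\frac{n+2s}{2})}{4\pi^{\frac n2 + 1 + s}}\,(|x|^2+y^2)^{-\frac{n+2s}{2}}.
\end{equation*}
Equating the two and cancelling the common factor $\frac{1}{4\pi^{n/2+1}}$ produces a ratio $\pi^{s}/\pi^{-s} = \pi^{2s}$ on the right, which combines with $(2\pi y)^{2s}$ to yield $\pi^{2s}\cdot(2\pi y)^{2s}/\pi^{2s} = (2y)^{2s}\pi^{2s}$ after moving the $\G(\frac{n-2s}{2})$ to the right as a reciprocal — the $\pi^{2s}$ factors cancel cleanly, and what remains is exactly the desired formula.

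This is essentially a bookkeeping exercise rather than a substantive proof: there is no genuine obstacle since Theorem \ref{CHnon} does all the analytic work and formula \eqref{stic} supplies the explicit profile of $\mathfrak e^{(\pm s)}$. The only point that requires a bit of care is matching the prefactors involving powers of $\pi$ and the two Gamma functions $\G(\frac{n \pm 2s}{2})$; I would verify these arithmetically at the end to confirm that the constant reduces precisely to $\G(\frac n2 + s)/\G(\frac n2 - s)$ and that the factor $(2y)^{2s}$ is recovered.
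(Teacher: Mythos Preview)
Your proposal is correct and follows essentially the same route as the paper's own proof: specialize Theorem \ref{CHnon} to $\bG=\R^n$, identify $q^{(\pm s)}$ with the Gaussian in \eqref{Euqs}, integrate in $t$ via \eqref{stic} to obtain the explicit form of $\mathfrak e^{(\pm s)}$, and substitute into \eqref{intrallnongeom}. Your verbal description of the $\pi$-cancellation is a bit garbled (the line ``$\pi^{2s}\cdot(2\pi y)^{2s}/\pi^{2s}=(2y)^{2s}\pi^{2s}$'' is not right as written), but the computation itself is straightforward---$(2\pi y)^{2s}\cdot \pi^{-2s}=(2y)^{2s}$---and you correctly flag that the constants need a final arithmetic check.
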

\begin{proof}
We observe that \eqref{defq} and its counterpart obtained by changing $s$ into $-s$ presently give
\begin{equation}\label{defeu}
 q^{(\pm s)}(g,t,y)  =   \frac{1}{(4\pi t)^{1\mp s}}e^{-\frac{y^2}{4t}} \frac{1}{(4\pi t)^{\frac n2}} e^{-\frac{|x|^2}{4t}} = \frac{1}{(4\pi t)^{\frac{n}{2}+1\mp s}}e^{-\frac{|x|^2+y^2}{4t}}.
\end{equation}
From \eqref{defeu} and the definition \eqref{defe} we find
\begin{equation}\label{defeuu}
\mathfrak e^{(\pm s)}(g,y) \overset{def}{=} \int_0^\infty \frac{1}{(4\pi t)^{\frac{n}{2}+1\mp s}}e^{-\frac{|x|^2+y^2}{4t}} dt = \frac{\G(\frac{n\mp 2s}2)}{4 \pi^{\frac n2+1 \mp s}} (|x|^2 + y^2)^{-\frac{n\mp 2s}2}.
\end{equation}
Substituting the right-hand side of \eqref{defeuu} in \eqref{intrallnongeom} of Theorem \ref{CHnon} we reach the desired conclusion.

\end{proof}

\section{Conformal intertwining formula}\label{S:confi}

The objective of this section is to prove Theorem \ref{CHrevisited} following the ideas outlined in Sections \ref{S:intro} and \ref{S:nci}. We begin by introducing the relevant geometric framework. Let $\bG$ be a Carnot group of step $r=2$ with Lie algebra $\bg = V_1 \oplus V_2$, which we assume endowed with an inner product $\langle \cdot,\cdot\rangle$ and induced norm $|\cdot|$.  We fix orthonormal basis $\{e_1,...,e_{m}\}$ and $\{\ve_1,...,\ve_k\}$ for $V_1$ and $V_2$ respectively. Points $z\in V_1$ and $\sigma\in V_2$ will be identified with either one of the representations $z = \sum_{j=1}^{m} z_j e_j$, $\sigma = \sum_{\ell=1}^k \sigma_\ell \ve_\ell$, or also $z = (z_1,...,z_{m})$, $\sigma = (\sigma_1,...,\sigma_k)$. Accordingly, whenever convenient we will identify the point $g = \exp(z+\sigma)\in \bG$ with its logarithmic coordinates $(z,\sigma)\in \R^m\times\R^k$. The Kaplan mapping $J: V_2 \to \operatorname{End}(V_1)$ is defined by 
\begin{equation}\label{kap}
\langle J(\sigma)z,\zeta\rangle = \langle [z,\zeta],\sigma\rangle = - \langle J(\sigma)\zeta,z\rangle,
\end{equation}
for $z,\zeta\in V_1$ and $\sigma\in V_2$.
Clearly, $J(\sigma)^\star = - J(\sigma)$, and one has $<J(\sigma)z,z> = 0$. By \eqref{kap} and the Baker-Campbell-Hausdorff formula, see e.g. p. 12 of \cite{CoG}, 
$$\exp(z+\sigma)  \exp(\zeta+\tau) = \exp{\left(z + \zeta + \sigma +\tau + \frac{1}{2}
[z,\zeta]\right)},$$
we obtain the non-Abelian multiplication in $\bG$
\begin{equation}\label{grouplaw}
g\circ g' = (z,\sigma)\circ (\zeta,\tau) = \big(z +\zeta,\sigma + \tau + \frac 12 \sum_{\ell=1}^{k} <J(\ve_\ell)z,\zeta>\ve_\ell\big).
\end{equation}
From \eqref{grouplaw} it is easy to recognise that $e=(0,0)$ and $(z,\sigma)^{-1}=(-z,-\sigma)$. 

Henceforth in this section we assume that $\bG$ is of Heisenberg type. By this we mean that for every $\sigma \in V_2$ one has 
$$J(\sigma)^2 = - |\sigma|^2\ {\mathbb I}_{V_1}.$$
We now denote by $\mathscr L$ the horizontal Laplacian associated with the orthonormal basis $\{e_1,\ldots,e_m\}$ of the horizontal layer $V_1$, see the opening of Section \ref{S:nci}. 

We consider the extension problem \eqref{parext0} above: given a function $u\in C^\infty_0(\bG\times \R_t)$, find a function $U\in C^\infty(\bG\times  \R_t \times \R^+_y)$ such that
\begin{equation}\label{parext00}
\begin{cases}
\mathfrak P_{(s)} U \overset{def}{=} \frac{\p^2 U}{\p y^2}  + \frac{1-2s}y \frac{\p U}{\p y} + \frac{y^2}4 \Delta_\sigma U+ \mathscr L U - \frac{\p U}{\p t} = 0,\ \ \ \ \ \text{in}\ \bG\times  \R_t \times \R^+_y,
\\
U(g,t,0) = u(g,t).
\end{cases}
\end{equation}
For the Heisenberg group $\Hn$ the time-independent version of \eqref{parext00} was first introduced in the cited work of Frank et al. \cite{FGMT}. Subsequently, Roncal and Thangavelu considered the time-dependent problem \eqref{parext00} in their cited papers \cite{RTaim, RT}. Our approach is quite different from that in these works since, as we have mentioned above, these authors do not rely on \eqref{parext00} to establish \eqref{Gnoneu}, but instead use non-commutative Fourier analysis and group representation theory.

We continue to denote by $q_{(\pm s)}((z,\sigma),t,y)$ respectively the fundamental solution of $\mathfrak P_{(s)}$ introduced in \eqref{parBfs0}, and the function obtained from it by changing $s$ into $-s$. We now consider the kernels in the thin space $\bG\times (0,\infty)$ defined by 
\begin{equation}\label{conn}
\mathscr K_{(\pm s)}((z,\sigma),t) = (4\pi t)^{1\mp s} q_{(\pm s)}((z,\sigma),t,0).
\end{equation}
From \eqref{parBfs0} and the definition \eqref{conn} it is easy to see that the kernels $\mathscr K_{(\pm s)}$ have the following explicit expression
\begin{equation}\label{poissono}
\mathscr K_{(\pm s)}((z,\sigma),t) = \frac{2^k}{\left(4\pi t\right)^{\frac m2+k}}\int_{\R^k} e^{-\frac it \langle \sigma,\la\rangle} \left(\frac{|\la|}{\sinh |\la|}\right)^{\frac m2+1\mp s}  e^{- \frac{|z|^2}{4t} \frac{|\la|}{\tanh |\la|}}d\la,\ \ \ \ 0<s\le 1.
\end{equation}
It is worth noting here that if we take $s=1$ in \eqref{poissono} we obtain the Gaveau-Hulanicki-Cygan heat kernel in $\bG$ (we refer the reader to \cite{GTstep2} for a recent pde-based derivation of such kernel). With \eqref{poissono} in hands, and by slightly abusing the notation, we define
\[
\mathscr K_{(\pm s)}(g,g',t)= \mathscr K_{(\pm s)}(g^{-1}\circ g',t) = \mathscr K_{(\pm s)}((g')^{-1}\circ g,t).
\]
With such functions in hands, we next introduce two operators on $\Lp(\bG)$ by the formula
\begin{equation}\label{Pst}
P_{(\pm s),t} u(g) = \int_{\bG} \mathscr K_{(\pm s)}(g,g',t) u(g') dg'.
\end{equation}
As for $P_{(s),t}$, we mention that its \emph{raison d'\^{e}tre} is in the fact, which is one of the main results in \cite{GTfeel}, that the operator
\begin{equation}\label{conformalriesz}
\mathscr I_{(2s)} u(g) = \frac{1}{\G(s)} \int_0^\infty t^{s-1} P_{(s),t} u(g) dt
\end{equation}
provides the inverse of $\mathscr L_s$. The relevance of the operator $P_{(-s),t} $, instead, is underscored by the following formula which was proved in \cite{RT} 
\begin{equation}\label{RTs}
\mathscr L_s u(g) = - \frac{s}{\G(1-s)} \int_0^\infty \frac{1}{t^{1+s}} \left[P_{(-s),t} u(g) - u(g)\right] dt.
\end{equation}

This preliminary discussion brings us to the heart of the present section, the proof of Theorem \ref{CHrevisited}. Our plan is to proceed as closely as possible to the proof of Theorem \ref{CHnon}, but we immediately encounter some difficulties. To explain this point we mention that in the non-geometric setting of Section \ref{S:nci} there are two aspects that play a crucial role: (i) the same heat kernel $p(\cdot,t)$ occurs both in the expression \eqref{bala2} of $\mathscr L^s$ and in that of the function $\mathfrak e^{(s)}$; (ii) the Chapman-Kolmogorov identity enters crucially in the final equality of the key identity \eqref{chko}. Both facts fail to hold in the present conformal setting. A third more pervasive complication is represented by the very different nature of the fundamental solutions $q^{(\pm s)}$ and $q_{(\pm s)}$ of the parabolic extension problems for $\mathscr L^s$ and $\mathscr L_s$.

The proof of Theorem \ref{CHrevisited} rests on two preliminary lemmas in which we circumvent the difficulties listed above. In the first one we  establish a replacement of the semigroup property for the group convolution of the different kernels $q_{(-s)}(\cdot,t,0)$ and $q_{(s)}(\cdot,\tau,y)$ which respectively appear in $P_{(-s),t}$ and $\mathfrak e_{(s)}(\cdot,y)$). We mention that the case $y=0$ of the following lemma was proved in \cite[Lemma 4.2]{GTfeel}.

\begin{lemma}\label{kernallazzamentoy}
Fix $s\in (0,1)$, $g\in\bG$, $t,\tau>0$, and $y\geq 0$. Then, we have
\begin{align}\label{eccolaQ}
\int_{\bG}& q_{(-s)}((g')^{-1}\circ g,t,0) q_{(s)}(g',\tau,y)\, dg'=\int_{\R^k} e^{2\pi i \langle \sigma,\la\rangle}\left(\frac{|\la|}{2\sinh 2\pi \tau |\la|}\right)^{1-s}\times\\
&\times\left(\frac{|\la|}{2\sinh 2\pi t |\la|}\right)^{1+s}\left(\frac{|\la|}{2\sinh 2\pi (t+\tau) |\la|}\right)^{\frac m2} e^{-\frac \pi2 |z|^2 \frac{|\la|}{\tanh 2\pi (t+\tau) |\la|}}e^{-\frac \pi2 y^2 \frac{|\lambda|}{\tanh 2\pi \tau |\lambda|}}d\la .\nonumber
\end{align}
\end{lemma}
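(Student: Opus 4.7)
The plan is to compute both sides of \eqref{eccolaQ} via the partial Fourier transform in the central variable $\sigma\in V_2\simeq\R^k$, with the convention $\hat f(\lambda)=\int_{\R^k}e^{-2\pi i\langle\sigma,\lambda\rangle}f(\sigma)\,d\sigma$. First, the left-hand side of \eqref{eccolaQ} is the group convolution $(q_{(s)}(\cdot,\tau,y)\ast q_{(-s)}(\cdot,t,0))(g)$, where $(A\ast B)(g)=\int_\bG A(g')B((g')^{-1}\circ g)\,dg'$. Applied to the integral representation \eqref{parBfs0}, the Fourier transform in $\sigma$ collapses the inner $\lambda$-integral through a delta function, leaving the clean expression
\[
\widehat{q_{(s)}(\cdot,t,y)}(z,\lambda)=\left(\frac{|\lambda|}{2\sinh 2\pi t|\lambda|}\right)^{\frac{m}{2}+1-s}\exp\left\{-\frac{\pi(|z|^2+y^2)}{2}\frac{|\lambda|}{\tanh 2\pi t|\lambda|}\right\},
\]
and the analogous formula for $q_{(-s)}$ with $s\mapsto -s$ and $y=0$.

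The second step is to reduce the group convolution on $\bG$ to a twisted convolution on $\R^m$. A direct computation based on the group law \eqref{grouplaw} and the identity $\sum_\ell\langle J(\ve_\ell)z_1,z\rangle\lambda_\ell=\langle J(\lambda)z_1,z\rangle$ gives
\[
\widehat{(A\ast B)}(z,\lambda)=\int_{\R^m}\hat A(z_1,\lambda)\,\hat B(z-z_1,\lambda)\,e^{-\pi i\langle J(\lambda)z_1,z\rangle}\,dz_1.
\]
Substituting the explicit formulas produces a Gaussian integral in $z_1$ with a complex linear term $\pi\langle\beta z+iJ(\lambda)z,z_1\rangle$, where I abbreviate $\alpha=|\lambda|/\tanh 2\pi\tau|\lambda|$ and $\beta=|\lambda|/\tanh 2\pi t|\lambda|$.

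The main work is to evaluate this Gaussian and recognize the answer. Completing the square and applying the Gaussian integral formula (analytically continuing in the linear term) reduces the $z$-dependence to $\exp\{-(\pi|z|^2/2)\cdot(|\lambda|^2+\alpha\beta)/(\alpha+\beta)\}$, provided the cross term $\langle z,J(\lambda)z\rangle$ vanishes and $|J(\lambda)z|^2=|\lambda|^2|z|^2$. This is precisely where the Heisenberg-type hypotheses enter: the skew-symmetry $J(\lambda)^\star=-J(\lambda)$ kills the cross term, while $J(\lambda)^2=-|\lambda|^2\,\mathbb I$ supplies the norm identity. I expect this to be the step requiring the most care, since without the squared identity the quantity $|\beta z+iJ(\lambda)z|^2$ would not collapse to a scalar multiple of $|z|^2$, and the clean right-hand side of \eqref{eccolaQ} would not emerge.

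Finally, I would invoke the hyperbolic identities
\[
\coth A+\coth B=\frac{\sinh(A+B)}{\sinh A\sinh B},\qquad \coth(A+B)=\frac{1+\coth A\coth B}{\coth A+\coth B},
\]
with $A=2\pi\tau|\lambda|$ and $B=2\pi t|\lambda|$, to rewrite $(|\lambda|^2+\alpha\beta)/(\alpha+\beta)=|\lambda|/\tanh 2\pi(t+\tau)|\lambda|$ and to simplify the $z_1$-Gaussian normalisation together with the original prefactors into the claimed sinh-ratios, in particular producing the $m/2$-power of $|\lambda|/(2\sinh 2\pi(t+\tau)|\lambda|)$. Inverting the Fourier transform in $\sigma$ (i.e.\ integrating against $e^{2\pi i\langle\sigma,\lambda\rangle}$) then yields \eqref{eccolaQ}. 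The extension from $y=0$ (which is \cite[Lemma 4.2]{GTfeel}) to $y\geq 0$ is painless, since $y$ enters $\hat A(z_1,\lambda)$ only through the multiplicative factor $\exp\{-\pi y^2|\lambda|/(2\tanh 2\pi\tau|\lambda|)\}$, which is independent of $z_1$ and thus passes unchanged through the twisted convolution.
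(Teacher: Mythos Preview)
Your proposal is correct and follows essentially the same route as the paper: partial Fourier transform in the central variable $\sigma$, reduction of the group convolution to a Gaussian integral over the horizontal variable $z'\in\R^m$ with a twisted phase $e^{\pi i\langle J(\lambda)z,z'\rangle}$, and then evaluation of that Gaussian using the Heisenberg-type identities $J(\lambda)^\star=-J(\lambda)$ and $J(\lambda)^2=-|\lambda|^2\,\mathbb I_{V_1}$ together with the hyperbolic addition formulas. The only cosmetic difference is that the paper does not carry out the Gaussian computation in place but instead quotes the resulting identity \eqref{presodala} from \cite[formula (4.6)]{GTfeel}, whereas you sketch the completion of the square explicitly; your observation that the parameter $y$ rides through the twisted convolution as the harmless multiplicative factor $e^{-\frac{\pi}{2} y^2 |\lambda|/\tanh 2\pi\tau|\lambda|}$ is exactly how the paper extends the $y=0$ case of \cite{GTfeel} to general $y\ge 0$.
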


\begin{proof}
To establish \eqref{eccolaQ}, we fix $t, \tau>0$, $y\geq 0$, and $z\in\R^m$. By partial Fourier transform with respect to the vertical variable $\sigma\in \R^k$, we observe that the desired conclusion \eqref{eccolaQ} will be true if we can prove that for all $\lambda\in\R^k$ the following holds
\begin{align}\label{claimFourier}
\int_{\R^k} e^{-2\pi i \langle \sigma,\la\rangle}\int_{\bG} q_{(-s)}((g')^{-1}\circ g,t,0) q_{(s)}(g',\tau,y)\, dg'd\sigma=\left(\frac{|\la|}{2\sinh 2\pi \tau |\la|}\right)^{1-s}\times&\\
\times\left(\frac{|\la|}{2\sinh 2\pi t |\la|}\right)^{1+s}\left(\frac{|\la|}{2\sinh 2\pi (t+\tau) |\la|}\right)^{\frac m2} e^{-\frac \pi2 |z|^2 \frac{|\la|}{\tanh 2\pi (t+\tau) |\la|}}e^{-\frac \pi2 y^2 \frac{|\lambda|}{\tanh 2\pi \tau |\lambda|}}.&
\nonumber
\end{align}
The rest of the proof of the lemma will thus be devoted to establishing \eqref{claimFourier}. Recalling the definition of $q_{(\pm s)}$ in \eqref{parBfs0}, by a simple change of variable we see that
\begin{equation}\label{defaltq}
q_{(\pm s)}(g',\tau,y)  = \int_{\R^k} e^{2\pi i \langle \sigma',\la\rangle} \left(\frac{|\la|}{2\sinh 2\pi \tau |\la|}\right)^{\frac m2+1\mp s}  e^{-\frac \pi2 (|z'|^2+y^2) \frac{|\la|}{\tanh 2\pi \tau |\la|}}d\la.
\end{equation}
From the expression \eqref{grouplaw} of the group law, we also have
\begin{align}\label{convqs}
&q_{(\pm s)}((g')^{-1}\circ g,t,0) \\
&= \int_{\R^k} e^{2\pi i \left(\langle \sigma-\sigma',\la\rangle + \frac 12\left\langle J(\lambda)z,z' \right\rangle\right)} \left(\frac{|\la|}{2\sinh 2\pi t |\la|}\right)^{\frac m2+1\mp s}  e^{-\frac \pi2 |z-z'|^2 \frac{|\la|}{\tanh 2\pi t |\la|}}d\la.\nonumber
\end{align}
Next, we note that by \eqref{defaltq}, \eqref{convqs}, and applying twice the Fourier inversion formula, for every fixed $\la\in \R^k$ we can rewrite the left-hand side of \eqref{claimFourier} in the following way
\begin{align}\label{lhs}
&\int_{\R^k} e^{-2\pi i \langle \sigma,\la\rangle}\int_{\bG} q_{(-s)}((g')^{-1}\circ g,t,0) q_{(s)}(g',\tau,y)\, dg'd\sigma\\
&=\int_{\R^k}\int_{\R^k}\int_{\R^k}\int_{\bG} e^{-2\pi i \langle \sigma,\la\rangle} e^{2\pi i \langle \sigma -\sigma',\mu\rangle}e^{\pi i \langle J(\mu)z,z'\rangle} e^{2\pi i \langle \sigma',\omega\rangle}\left(\frac{|\mu|}{2\sinh 2\pi t |\mu|}\right)^{\frac m2+1+s}\times\nonumber\\  
&\times \left(\frac{|\omega|}{2\sinh 2\pi \tau |\omega|}\right)^{\frac m2+1-s} \times e^{-\frac \pi2 |z-z'|^2 \frac{|\mu|}{\tanh 2\pi t |\mu|}} e^{-\frac \pi2 (|z'|^2+y^2) \frac{|\omega|}{\tanh 2\pi \tau |\omega|}} d g' d\omega d\mu d\sigma\nonumber\\
&=\left(\frac{|\lambda|}{2\sinh 2\pi t |\lambda|}\right)^{\frac m2+1+s}\int_{\R^k}\int_{\bG} e^{-2\pi i \langle \sigma',\lambda\rangle}e^{\pi i \langle J(\lambda)z,z'\rangle} e^{2\pi i \langle \sigma',\omega\rangle}  \left(\frac{|\omega|}{2\sinh 2\pi \tau |\omega|}\right)^{\frac m2+1-s} \times\nonumber\\
&\times e^{-\frac \pi2 |z-z'|^2 \frac{|\lambda|}{\tanh 2\pi t |\lambda|}} e^{-\frac \pi2 (|z'|^2+y^2) \frac{|\omega|}{\tanh 2\pi \tau |\omega|}} d g' d\omega \nonumber\\
&=\left(\frac{|\lambda|}{2\sinh 2\pi t |\lambda|}\right)^{\frac m2+1+s}\left(\frac{|\lambda|}{2\sinh 2\pi \tau |\lambda|}\right)^{\frac m2+1-s}e^{-\frac \pi2 y^2 \frac{|\lambda|}{\tanh 2\pi \tau |\lambda|}}\times\nonumber\\
&\times\int_{\R^m} e^{\pi i \langle J(\lambda)z,z'\rangle} e^{-\frac \pi2 |z-z'|^2 \frac{|\lambda|}{\tanh 2\pi t |\lambda|}} e^{-\frac \pi2 |z'|^2 \frac{|\lambda|}{\tanh 2\pi \tau |\lambda|}} d z'.\nonumber
\end{align}

We now notice that the last integral is the same appearing in proof of \cite[Lemma 4.2]{GTfeel}: in fact, exploiting the properties of the $J$ map \eqref{kap}, and elementary manipulations of hyperbolic functions, in the final equality of formula (4.6) in \cite{GTfeel} we showed that
\begin{align}\label{presodala}
&\int_{\R^m} e^{\pi i \langle J(\lambda)z,z'\rangle} e^{-\frac \pi2 |z-z'|^2 \frac{|\lambda|}{\tanh 2\pi t |\lambda|}} e^{-\frac \pi2 |z'|^2 \frac{|\lambda|}{\tanh 2\pi \tau |\lambda|}} d z'\\
&=\left(\frac{|\la|}{2\sinh 2\pi (t+\tau) |\la|}\right)^{\frac m2} e^{-\frac \pi2 |z|^2 \frac{|\la|}{\tanh 2\pi (t+\tau) |\la|}}.\notag
\end{align}
Inserting \eqref{presodala} in \eqref{lhs} we obtain the desired conclusion \eqref{claimFourier}. This completes the proof of the lemma.

\end{proof}

The next lemma is purely technical and its significance will be clear in the subsequent discussion.

\begin{lemma}\label{L:cowboy}
Let $0<s<1$, $B, \mu>0$. We have
\begin{align}\label{superpippero}
&\int_0^\infty \left(\frac{\mu}{(1+\rho)\sinh \frac{\rho}{1+\rho}\mu}\right)^2  \left(\frac{\sinh \frac{\rho}{1+\rho}\mu}{\sinh \frac{\mu}{1+\rho}}\right)^s e^{-\frac{B\mu}{\tanh \frac{\rho}{1+\rho}\mu }}d\rho \\
&= B^{s-1}\left(\frac{\mu}{\sinh\mu}\right)^s\G(1-s) e^{-\frac{B\mu}{\tanh\mu }}.\notag
\end{align}
\end{lemma}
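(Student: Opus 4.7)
The plan is to reduce the left-hand side to the elementary identity $\int_0^\infty v^{-s} e^{-\alpha v}\,dv = \alpha^{s-1}\G(1-s)$ (valid for $\alpha,1-s>0$) by means of two well-chosen substitutions that exploit the hyperbolic structure of the integrand.

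First, I would perform the substitution $u = \rho/(1+\rho)$, which maps $(0,\infty)$ bijectively onto $(0,1)$, with $1-u = 1/(1+\rho)$ and $d\rho = (1+\rho)^2\, du$. The factor $(1+\rho)^{-2}$ built into the squared quantity exactly absorbs the Jacobian, and the integral collapses to
\[
\int_0^1 \frac{\mu^2}{\sinh^2(u\mu)}\left(\frac{\sinh u\mu}{\sinh(1-u)\mu}\right)^{\!s} e^{-B\mu\coth(u\mu)}\,du.
\]

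Next, motivated by the exponential $e^{-B\mu\coth(u\mu)}$ and by the identity $\mu^2\,du/\sinh^2(u\mu) = -\mu\,d(\coth u\mu)$, I would set $w = \coth(u\mu)$. The hyperbolic addition formula $\sinh((1-u)\mu) = \sinh\mu\cosh(u\mu) - \cosh\mu\sinh(u\mu)$, upon division by $\sinh(u\mu)$, yields
\[
\frac{\sinh u\mu}{\sinh(1-u)\mu} = \frac{1}{w\sinh\mu - \cosh\mu}.
\]
The new limits are $w\in(\coth\mu,\infty)$, and the integral becomes
\[
\mu\int_{\coth\mu}^\infty \bigl(w\sinh\mu-\cosh\mu\bigr)^{-s}\,e^{-B\mu w}\,dw.
\]

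Finally, the affine change of variable $v = w\sinh\mu - \cosh\mu$ sends the lower endpoint $w = \coth\mu$ to $v = 0$ (using $\coth\mu\cdot\sinh\mu = \cosh\mu$), and splitting the exponential produces
\[
\frac{\mu}{\sinh\mu}\,e^{-B\mu\coth\mu}\int_0^\infty v^{-s} e^{-(B\mu/\sinh\mu)\,v}\,dv.
\]
Applying the gamma identity with $\alpha = B\mu/\sinh\mu$ and simplifying the powers delivers the right-hand side $B^{s-1}\bigl(\mu/\sinh\mu\bigr)^s\G(1-s)\,e^{-B\mu/\tanh\mu}$. No step involves a genuine obstacle: the entire argument is a chain of elementary substitutions, and the only delicate point is the matching of endpoints in the $w$- and $v$-integrals, which is automatic from the Osgood-type identity $\coth\mu\sinh\mu = \cosh\mu$.
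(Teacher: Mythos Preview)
Your proof is correct and is essentially the same argument as the paper's, just organized differently. The paper first multiplies through by $e^{B\mu/\tanh\mu}$ using the identity $\coth\mu-\coth\frac{\rho\mu}{1+\rho}=-\frac{1}{\sinh\mu}\frac{\sinh\frac{\mu}{1+\rho}}{\sinh\frac{\rho\mu}{1+\rho}}$ and then performs the single substitution $\tau=\frac{B\mu}{\sinh\mu}\frac{\sinh\frac{\mu}{1+\rho}}{\sinh\frac{\rho\mu}{1+\rho}}$; your three substitutions $u=\frac{\rho}{1+\rho}$, $w=\coth(u\mu)$, $v=w\sinh\mu-\cosh\mu$ compose exactly to $v=\frac{\sinh((1-u)\mu)}{\sinh(u\mu)}=\frac{\sinh\mu}{B\mu}\tau$, so the two routes are the same up to a harmless rescaling.
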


\begin{proof}
If we keep in mind the following identity
$$
\frac{1}{\tanh \mu} - \frac{1}{\tanh \frac{\rho}{1+\rho}\mu}= - \frac{1}{\sinh\mu} \frac{\sinh \frac{\mu}{1+\rho}}{\sinh \frac{\rho}{1+\rho}\mu},
$$
we have
{\allowdisplaybreaks
\begin{align*}
&e^{\frac{B\mu}{\tanh\mu }}\int_0^\infty \left(\frac{\mu}{(1+\rho)\sinh \frac{\rho}{1+\rho}\mu}\right)^2  \left(\frac{\sinh \frac{\rho}{1+\rho}\mu}{\sinh \frac{\mu}{1+\rho}}\right)^s e^{-\frac{B\mu}{\tanh \frac{\rho}{1+\rho}\mu }}d\rho\\
&=\int_0^\infty \left(\frac{\mu}{(1+\rho)\sinh \frac{\rho}{1+\rho}\mu}\right)^2  \left(\frac{\sinh \frac{\rho}{1+\rho}\mu}{\sinh \frac{\mu}{1+\rho}}\right)^s e^{-\frac{B\mu}{\sinh\mu}\frac{\sinh \frac{\mu}{1+\rho}}{\sinh \frac{\rho}{1+\rho}\mu}}d\rho.
\end{align*}}

In the last integral we now make the change of variable $\rho\to \tau= \frac{B\mu}{\sinh\mu}\frac{\sinh \frac{\mu}{1+\rho}}{\sinh \frac{\rho}{1+\rho}\mu}$, which yields $d\tau = - \frac{B\mu^2}{(1+\rho)^2\sinh^2 \frac{\rho}{1+\rho}\mu} d\rho$. We thus have
\begin{align*}
&e^{\frac{B\mu}{\tanh\mu }}\int_0^\infty \left(\frac{\mu}{(1+\rho)\sinh \frac{\rho}{1+\rho}\mu}\right)^2  \left(\frac{\sinh \frac{\rho}{1+\rho}\mu}{\sinh \frac{\mu}{1+\rho}}\right)^s e^{-\frac{B\mu}{\tanh \frac{\rho}{1+\rho}\mu }}d\rho\\
&=\frac{1}{B}\int_0^\infty  \left(\frac{B\mu}{\tau \sinh\mu }\right)^{s} e^{-\tau}d\tau= B^{s-1}\left(\frac{\mu}{\sinh\mu}\right)^s\G(1-s).
\end{align*}
This proves \eqref{superpippero}.

\end{proof}

We are now ready to present the proof of the main result in this note.

\begin{proof}[Proof of Theorem \ref{CHrevisited}]
For $u\in C^\infty(\bG)$ and suitably decay at infinity, and $g \in \bG$, one has
\begin{align}\label{baladeriva}
& \mathscr L_s u(g) = - \frac{s}{\G(1-s)} \int_0^\infty \tau^{-s-1}\int_0^\tau \frac{\partial}{\partial t}\left(P_{(-s),t}u (g)\right) dt d\tau
\\
&= - \frac{s }{\G(1-s)} \int_0^\infty \int_{t}^{\infty} \tau^{-s-1} \frac{\partial}{\partial t}\left(P_{(-s),t}u (g)\right) d\tau dt\nonumber
\\
&= - \frac{1}{\G(1-s)} \int_0^\infty t^{-s} \frac{\partial}{\partial t}\left(P_{(-s),t}u (g) \right) dt, \nonumber
\end{align}
which is the conformal counterpart of \eqref{bala2}. For $y>0$ we want to apply  \eqref{baladeriva} with the choice
$$u(\cdot)=\mathfrak e_{(s)}(\cdot,y).$$
By \eqref{conn}, \eqref{Pst} and \eqref{frake0}, we have
\begin{align*}
& \mathscr L_s (\mathfrak e_{(s)}(\cdot,y))(g)=- \frac{1}{\G(1-s)} \int_0^\infty t^{-s} \frac{\partial}{\partial t}\left(P_{(-s),t}(\mathfrak e_{(s)}(\cdot,y))(g) \right) dt\\
&=- \frac{1}{\G(1-s)} \int_0^\infty\int_0^\infty t^{-s} \frac{\partial}{\partial t}\left( \int_{\bG}(4\pi t)^{1+ s} q_{(- s)}((g')^{-1}\circ g,t,0)q_{(s)}(g',\tau,y) dg' \right)  d\tau dt.
\end{align*}
In view of Lemma \ref{kernallazzamentoy} we thus infer
{\allowdisplaybreaks
\begin{align*}
& \mathscr L_s (\mathfrak e_{(s)}(\cdot,y))(g)\\
&=\frac{-1}{\G(1-s)} \int_0^\infty\int_0^\infty\int_{\R^k} t^{-s} \frac{\partial}{\partial t}\left( (4\pi t)^{1+ s} e^{2\pi i \langle \sigma,\la\rangle}\left(\frac{|\la|}{2\sinh 2\pi \tau |\la|}\right)^{1-s}\left(\frac{|\la|}{2\sinh 2\pi t |\la|}\right)^{1+s}\times\right.\notag\\
&\times\left.\left(\frac{|\la|}{2\sinh 2\pi (t+\tau) |\la|}\right)^{\frac m2} e^{-\frac \pi2 |z|^2 \frac{|\la|}{\tanh 2\pi (t+\tau) |\la|}}e^{-\frac \pi2 y^2 \frac{|\lambda|}{\tanh 2\pi \tau |\lambda|}}  \right)  d\lambda d\tau dt\notag \\
&=- \frac{(4\pi )^{s-1}}{\G(1-s)} \int_0^\infty\int_0^\infty\int_{\R^k} \frac{e^{2\pi i \langle \sigma,\la\rangle}}{\tau} \left(\frac{\tau}{t}\right)^s \frac{\partial}{\partial t}\left(  \left(\frac{2\pi \tau|\la|}{\sinh 2\pi \tau |\la|}\right)^{1-s}\left(\frac{2\pi t|\la|}{\sinh 2\pi t |\la|}\right)^{1+s}\times\right.\notag\\
&\times\left.\left(\frac{|\la|}{2\sinh 2\pi (t+\tau) |\la|}\right)^{\frac m2} e^{-\frac \pi2 |z|^2 \frac{|\la|}{\tanh 2\pi (t+\tau) |\la|}}e^{-\frac \pi2 y^2 \frac{|\lambda|}{\tanh 2\pi \tau |\lambda|}}  \right)  d\lambda d\tau dt.\notag
\end{align*}}
Hence, the same change of variable as in \eqref{change} leads to the following identity
{\allowdisplaybreaks
\begin{align}\label{id2}
& \mathscr L_s (\mathfrak e_{(s)}(\cdot,y))(g)\\
&=\frac{-(4\pi )^{s-1}}{\G(1-s)} \int_{\R^k}\int_0^\infty\int_0^\infty e^{2\pi i \langle \sigma,\la\rangle}\frac{\rho^{s-1}}{1+\rho} \left(\frac{\partial}{\partial v} - \frac{\rho(1+\rho)}{v}\frac{\partial}{\partial\rho}\right)\left( \left(\frac{2\pi \frac{\rho}{1+\rho}v|\la|}{\sinh 2\pi \frac{\rho}{1+\rho}v |\la|}\right)^{1-s}\times\right.\notag\\
&\times\left.\left(\frac{2\pi \frac{v}{1+\rho}|\la|}{\sinh 2\pi\frac{v}{1+\rho} |\la|}\right)^{1+s}\left(\frac{|\la|}{2\sinh 2\pi v |\la|}\right)^{\frac m2} e^{-\frac \pi2 |z|^2 \frac{|\la|}{\tanh 2\pi v |\la|}}e^{-\frac \pi2 y^2 \frac{|\lambda|}{\tanh 2\pi \frac{\rho}{1+\rho}v |\lambda|}}  \right)   d\rho dv d\lambda.\notag
\end{align}}
{\allowdisplaybreaks
We notice that, for any $\lambda \in\R^k$ and $\rho>0$, the function
\begin{align*}
\left(\frac{2\pi \frac{\rho}{1+\rho}v|\la|}{\sinh 2\pi \frac{\rho}{1+\rho}v |\la|}\right)^{1-s}\left(\frac{2\pi \frac{v}{1+\rho}|\la|}{\sinh 2\pi\frac{v}{1+\rho} |\la|}\right)^{1+s}\left(\frac{|\la|}{2\sinh 2\pi v |\la|}\right)^{\frac m2} e^{-\frac \pi2 |z|^2 \frac{|\la|}{\tanh 2\pi v |\la|}}e^{-\frac \pi2 y^2 \frac{|\lambda|}{\tanh 2\pi \frac{\rho}{1+\rho}v |\lambda|}}
\end{align*}
converges to $0$ both as $v\to 0^+$ and $v\to \infty$ (this holds for any $z\in \R^m$ and $y>0$). Inserting this information in \eqref{id2} we deduce}
{\allowdisplaybreaks
\begin{align}\label{id3}
& \mathscr L_s (\mathfrak e_{(s)}(\cdot,y))(g)\\
&=\frac{(4\pi )^{s-1}}{\G(1-s)} \int_{\R^k}\int_0^\infty\int_0^\infty e^{2\pi i \langle \sigma,\la\rangle}\frac{\rho^{s}}{v}\frac{\partial}{\partial\rho}\left( \left(\frac{2\pi \frac{\rho}{1+\rho}v|\la|}{\sinh 2\pi \frac{\rho}{1+\rho}v |\la|}\right)^{1-s}\left(\frac{2\pi \frac{v}{1+\rho}|\la|}{\sinh 2\pi\frac{v}{1+\rho} |\la|}\right)^{1+s}\times\right.\notag\\
&\times\left.\left(\frac{|\la|}{2\sinh 2\pi v |\la|}\right)^{\frac m2} e^{-\frac \pi2 |z|^2 \frac{|\la|}{\tanh 2\pi v |\la|}}e^{-\frac \pi2 y^2 \frac{|\lambda|}{\tanh 2\pi \frac{\rho}{1+\rho}v |\lambda|}}  \right)   d\rho dv d\lambda\notag\\
&=\frac{(4\pi )^{s}}{\G(1-s)} \int_{\R^k}\int_0^\infty e^{2\pi i \langle \sigma,\la\rangle} \frac{1}{(4\pi v)^{\frac m2 +1}} \left(\frac{2\pi v|\la|}{\sinh 2\pi v |\la|}\right)^{\frac m2}e^{-\frac \pi2 |z|^2 \frac{|\la|}{\tanh 2\pi v |\la|}}\times \notag\\ 
&\times\left( \int_0^\infty \rho^{s}\frac{\partial}{\partial\rho}\left( \left(\frac{2\pi \frac{\rho}{1+\rho}v|\la|}{\sinh 2\pi \frac{\rho}{1+\rho}v |\la|}\right)^{1-s}\left(\frac{2\pi \frac{v}{1+\rho}|\la|}{\sinh 2\pi\frac{v}{1+\rho} |\la|}\right)^{1+s}e^{-\frac \pi2 y^2 \frac{|\lambda|}{\tanh 2\pi \frac{\rho}{1+\rho}v |\lambda|}}  \right)   d\rho\right) dv d\lambda.
\notag
\end{align}}
To deal with the integral in the last equality in \eqref{id3}, for $0<s<1$ and $\mu>0$ we now consider the function $h_{s,\mu}:(0,\infty)\to \R$ defined by
\begin{equation}\label{defhsm}
h_{s,\mu}(\rho)=\frac{\mu}{\sinh\mu}\left(\frac{\sinh \frac{\rho \mu}{1+\rho}}{\sinh \frac{ \mu}{1+\rho}}\right)^s\left[\frac{\mu}{\sinh\mu}\frac{\rho}{(1+\rho)^2}\left(\frac{\sinh \frac{\rho \mu}{1+\rho}}{\sinh \frac{ \mu}{1+\rho}} +2\cosh\mu + \frac{\sinh \frac{ \mu}{1+\rho}}{\sinh \frac{\rho \mu}{1+\rho}} \right)-1\right].
\end{equation}
This function was introduced in \cite[Section 4]{GTfeel}, and the main motivation behind it is in the formula 
\begin{equation}\label{derhsm}
h'_{s,\mu}(\rho)=\rho^{s}\frac{\partial}{\partial\rho}\left( \left(\frac{\frac{\rho}{1+\rho}\mu}{\sinh \frac{\rho}{1+\rho}\mu}\right)^{1-s}\left(\frac{\frac{\mu}{1+\rho}}{\sinh \frac{\mu}{1+\rho}}\right)^{1+s}\right),
\end{equation}
see \cite[equation (4.19)]{GTfeel}. This property was critical in showing that the operator $\mathscr I_{(2s)}$ in \eqref{conformalriesz} inverts $\mathscr L_s$. If in \eqref{defhsm} we now take 
$\mu = 2\pi v |\la|$, we obtain from \eqref{id3} 
{\allowdisplaybreaks
\begin{align}\label{new}
\mathscr L_s (\mathfrak e_{(s)}(\cdot,y))(g)=\frac{(4\pi )^{s}}{\G(1-s)} \int_{\R^k}\int_0^\infty e^{2\pi i \langle \sigma,\la\rangle} \frac{1}{(4\pi v)^{\frac m2 +1}} \left(\frac{2\pi v|\la|}{\sinh 2\pi v |\la|}\right)^{\frac m2}\times&
\\ 
\times e^{-\frac \pi2 |z|^2 \frac{|\la|}{\tanh 2\pi v |\la|}} \int_0^\infty \left(h'_{s,2\pi v |\la|}(\rho)e^{-\frac \pi2 y^2 \frac{|\lambda|}{\tanh 2\pi \frac{\rho}{1+\rho}v |\lambda|}} +\right.&\notag\\
+\left. \rho^{s}\left(\frac{2\pi \frac{\rho}{1+\rho}v|\la|}{\sinh 2\pi \frac{\rho}{1+\rho}v |\la|}\right)^{1-s}\left(\frac{2\pi \frac{v}{1+\rho}|\la|}{\sinh 2\pi\frac{v}{1+\rho} |\la|}\right)^{1+s}\frac{\partial}{\partial\rho}\left(e^{-\frac \pi2 y^2 \frac{|\lambda|}{\tanh 2\pi \frac{\rho}{1+\rho}v |\lambda|}}  \right)\right) d\rho dv d\lambda,\notag&
\end{align}
where in the last identity we have used \eqref{derhsm}. We also observe that from \eqref{defhsm} we have $h_{s,\mu}(\rho)\to 0$ as $\rho\to 0$ and $\rho \to \infty$ (see \cite[(4.18)]{GTfeel}). From \eqref{new} we thus find}
{\allowdisplaybreaks
\begin{align}\label{id4}
& \mathscr L_s (\mathfrak e_{(s)}(\cdot,y))(g)\\
&=\frac{(4\pi )^{s}}{\G(1-s)} \int_{\R^k}\int_0^\infty e^{2\pi i \langle \sigma,\la\rangle} \frac{1}{(4\pi v)^{\frac m2 +1}} \left(\frac{2\pi v|\la|}{\sinh 2\pi v |\la|}\right)^{\frac m2}e^{-\frac \pi2 |z|^2 \frac{|\la|}{\tanh 2\pi v |\la|}}\times\notag\\
&\times\int_0^\infty \frac{\partial}{\partial\rho}\left(e^{-\frac \pi2 y^2 \frac{|\lambda|}{\tanh 2\pi \frac{\rho}{1+\rho}v |\lambda|}}  \right)\times\notag\\
&\times\left(\rho^{s}\left(\frac{2\pi \frac{\rho}{1+\rho}v|\la|}{\sinh 2\pi \frac{\rho}{1+\rho}v |\la|}\right)^{1-s}\left(\frac{2\pi \frac{v}{1+\rho}|\la|}{\sinh 2\pi\frac{v}{1+\rho} |\la|}\right)^{1+s} - h_{s,2\pi v |\la|}(\rho)\right)   d\rho  dv d\lambda\notag\\
&=\frac{4^s \pi^{s+1} y^2}{\G(1-s)} \int_{\R^k}\int_0^\infty e^{2\pi i \langle \sigma,\la\rangle} \frac{1}{(4\pi v)^{\frac m2 +2}} \left(\frac{2\pi v|\la|}{\sinh 2\pi v |\la|}\right)^{\frac m2}e^{-\frac \pi2 |z|^2 \frac{|\la|}{\tanh 2\pi v |\la|}}\times\notag\\
&\times\int_0^\infty e^{-\frac \pi2 y^2 \frac{|\lambda|}{\tanh 2\pi \frac{\rho}{1+\rho}v |\lambda|}}\left(\frac{2\pi  v |\la|}{(1+\rho)\sinh 2\pi \frac{\rho}{1+\rho}v |\la|}\right)^2\times\notag\\
&\times\left(\rho^{s}\left(\frac{2\pi \frac{\rho}{1+\rho}v|\la|}{\sinh 2\pi \frac{\rho}{1+\rho}v |\la|}\right)^{1-s}\left(\frac{2\pi \frac{v}{1+\rho}|\la|}{\sinh 2\pi\frac{v}{1+\rho} |\la|}\right)^{1+s} - h_{s,2\pi v |\la|}(\rho)\right)   d\rho dv d\lambda\notag.
\end{align}}
From the identity
\begin{align*}
\sinh^2 2\pi v |\la|=&\sinh^2 2\pi \frac{\rho}{1+\rho}v |\la| + \sinh^2 2\pi \frac{v}{1+\rho} |\la| +\\
&+ 2 (\cosh 2\pi v |\la|)(\sinh 2\pi \frac{\rho}{1+\rho}v |\la|)(\sinh 2\pi \frac{v}{1+\rho} |\la|) 
\end{align*}
and the definition \eqref{defhsm}, a straightforward computation shows that
$$
\rho^{s}\left(\frac{2\pi \frac{\rho}{1+\rho}v|\la|}{\sinh 2\pi \frac{\rho}{1+\rho}v |\la|}\right)^{1-s}\left(\frac{2\pi \frac{v}{1+\rho}|\la|}{\sinh 2\pi\frac{v}{1+\rho} |\la|}\right)^{1+s} - h_{s,2\pi v |\la|}(\rho) = \frac{2\pi v|\la|}{\sinh 2\pi v |\la|} \left(\frac{\sinh 2\pi \frac{\rho}{1+\rho}v |\la|}{\sinh 2\pi \frac{v}{1+\rho} |\la|}\right)^s.
$$
From \eqref{id4} we thus infer
\begin{align}\label{id5}
& \mathscr L_s (\mathfrak e_{(s)}(\cdot,y))(g)\\
&=\frac{4^s \pi^{s+1} y^2}{\G(1-s)}\int_0^\infty\int_{\R^k} e^{2\pi i \langle \sigma,\la\rangle} \frac{1}{(4\pi v)^{\frac m2 +2}} \left(\frac{2\pi v|\la|}{\sinh 2\pi v |\la|}\right)^{\frac m2 + 1}e^{-\frac \pi2 |z|^2 \frac{|\la|}{\tanh 2\pi v |\la|}} \times\notag\\
&\times\left(\int_0^\infty \left(\frac{2\pi  v |\la|}{(1+\rho)\sinh 2\pi \frac{\rho}{1+\rho}v |\la|}\right)^2  \left(\frac{\sinh 2\pi \frac{\rho}{1+\rho}v |\la|}{\sinh 2\pi \frac{v}{1+\rho} |\la|}\right)^s e^{-\frac \pi2 y^2 \frac{|\lambda|}{\tanh 2\pi \frac{\rho}{1+\rho}v |\lambda|}}d\rho\right) d\lambda dv \notag.
\end{align}
For any $\lambda\neq 0$ and $v>0$ we now apply Lemma \ref{L:cowboy} with the choices $\mu=2\pi v |\la|$ and $B=\frac{y^2}{4v}$, and find
\begin{align}\label{ehiehiehi}
&\int_0^\infty \left(\frac{2\pi  v |\la|}{(1+\rho)\sinh 2\pi \frac{\rho}{1+\rho}v |\la|}\right)^2  \left(\frac{\sinh 2\pi \frac{\rho}{1+\rho}v |\la|}{\sinh 2\pi \frac{v}{1+\rho} |\la|}\right)^s e^{-\frac \pi2 y^2 \frac{|\lambda|}{\tanh 2\pi \frac{\rho}{1+\rho}v |\lambda|}}d\rho \\
&= \frac{y^{2s-2}}{(4 v)^{s-1}}\left(\frac{2\pi v|\la|}{\sinh 2\pi v |\la|}\right)^{s}\G(1-s)e^{-\frac \pi2 y^2 \frac{|\la|}{\tanh 2\pi v |\la|}} \notag. 
\end{align}
Inserting \eqref{ehiehiehi} in \eqref{id5}, we finally obtain
\begin{align*}
& \mathscr L_s (\mathfrak e_{(s)}(\cdot,y))(g)\\
&=(2\pi)^{2s} y^{2s}\int_0^\infty\int_{\R^k} e^{2\pi i \langle \sigma,\la\rangle} \frac{1}{(4\pi v)^{\frac m2 +1+s}} \left(\frac{2\pi v|\la|}{\sinh 2\pi v |\la|}\right)^{\frac m2 + 1+s}e^{-\frac \pi2 (|z|^2+y^2) \frac{|\la|}{\tanh 2\pi v |\la|}} d\lambda dv \notag\\
&=(2\pi y)^{2s}\int_0^\infty q_{(- s)}((z,\sigma),v,y) dv = (2\pi y)^{2s} \mathfrak e_{(- s)}((z,\sigma),y),
\end{align*}
which gives the desired conclusion \eqref{confCH}.

\end{proof}

Although it should at this point be self-evident, for the sake of completeness we show how \eqref{Gnoneu} can be derived from Theorem \ref{CHrevisited} and Theorem \hyperref[A]{A}.

\begin{corollary}\label{C:chsect}
Let $\bG$ be of Heisenberg type, and let $0<s<1$. For every $(z,\sigma)\in\bG$, and $y>0$ one has
\begin{align*}
& \mathscr L_s\left(((|z|^2+y^2)^2+16|\sigma|^2)^{-\frac{m + 2k - 2s}4}\right) 
\\
&=\frac{\G\left(\frac{m+2+2s}{4}\right)\G\left(\frac{m+2k+2s}{4}\right)}{\G\left(\frac{m+2-2s}{4}\right)\G\left(\frac{m+2k-2s}{4}\right)} (4y)^{2s}((|z|^2+y^2)^2+16|\sigma|^2)^{-\frac{m + 2k + 2s}4}.
\end{align*}
\end{corollary}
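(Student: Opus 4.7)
The plan is to derive this formula as an immediate consequence of Theorem \ref{CHrevisited} combined with the explicit representation of $\mathfrak e_{(\pm s)}$ provided by Theorem \hyperref[A]{A}. The corollary really is just a matter of bookkeeping constants, since Theorem \hyperref[A]{A} tells us that $\mathfrak e_{(s)}((z,\sigma),y)$ is a constant multiple of $((|z|^2+y^2)^2+16|\sigma|^2)^{-\frac{m+2k-2s}{4}}$ (note $\frac 12(\frac m2 + k - s) = \frac{m+2k-2s}{4}$), and similarly $\mathfrak e_{(-s)}((z,\sigma),y)$ is a constant multiple of the right-hand side of the desired formula.

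More precisely, first I would set
\[
A_{\pm s} \ \overset{def}{=}\ \frac{\G(\pm s)}{(4\pi)^{1\mp s}} C_{(\pm s)}(m,k),
\]
(with $\G(-s)$ read as $|\G(-s)|$ in the minus-sign case, per the statement of Theorem \hyperref[A]{A}), so that
\[
\mathfrak e_{(\pm s)}((z,\sigma),y) = A_{\pm s}\, ((|z|^2+y^2)^2+16|\sigma|^2)^{-\frac{m+2k \mp 2s}{4}}.
\]
Dividing the identity of Theorem \ref{CHrevisited} by $A_{s}$ then gives immediately
\[
\mathscr L_s\Big(((|z|^2+y^2)^2+16|\sigma|^2)^{-\frac{m+2k-2s}{4}}\Big) = (2\pi y)^{2s}\,\frac{A_{-s}}{A_{s}}\,((|z|^2+y^2)^2+16|\sigma|^2)^{-\frac{m+2k+2s}{4}}.
\]

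The only remaining task is to verify that $(2\pi y)^{2s}\frac{A_{-s}}{A_{s}}$ equals the announced constant times $(4y)^{2s}$. From the explicit form of $C_{(\pm s)}(m,k)$ in \eqref{C}, the factors $|\G(-s)|$, $\G(s)$ and $\pi^{(m+k+1)/2}$ cancel in the ratio $A_{-s}/A_{s}$, leaving powers of $2$, $4\pi$ and the four Gamma values $\G(\frac{m+2\pm 2s}{4})$, $\G(\frac{m+2k\pm 2s}{4})$. Collecting the powers of $2$ and $\pi$ yields $\frac{A_{-s}}{A_s} = \frac{2^{2s}}{\pi^{2s}}\cdot \frac{\G(\frac{m+2+2s}{4})\G(\frac{m+2k+2s}{4})}{\G(\frac{m+2-2s}{4})\G(\frac{m+2k-2s}{4})}$, and multiplying by $(2\pi y)^{2s}$ produces $(4y)^{2s}$ in front of the Gamma-quotient, which is exactly the desired formula.

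There is no genuine obstacle here — the corollary is a direct combination of the two main results and a short numerical simplification. The one thing to be a little careful about is the convention that $\G(-s)$ in the formula of Theorem \hyperref[A]{A} is taken in absolute value when one passes from $s$ to $-s$, so that $|\G(-s)|/\G(s)$ simplifies cleanly in the ratio $A_{-s}/A_s$ and no spurious sign remains in the final identity.
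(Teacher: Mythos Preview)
Your proposal is correct and follows essentially the same approach as the paper's own proof: both combine Theorem \hyperref[A]{A} with Theorem \ref{CHrevisited}, then simplify the resulting ratio of constants. The paper records the intermediate identity \eqref{ratiocsmk} for $\frac{|\G(-s)|C_{(-s)}(m,k)}{\G(s)C_{(s)}(m,k)}=4^{3s}\cdot[\text{Gamma ratio}]$, which is exactly equivalent to your computation of $A_{-s}/A_s$ (indeed the $\G(s)$ in the prefactor of $A_s$ cancels against the $\G(s)$ in the denominator of $C_{(s)}(m,k)$, so the sign subtlety you flag never actually intervenes).
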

\begin{proof}
Combining \eqref{qmasomeno0} and \eqref{confCH}, we obtain
\begin{align}\label{transferch}
& \mathscr L_s\left(((|z|^2+y^2)^2+16|\sigma|^2)^{-\frac{m + 2k - 2s}4}\right) 
\\
&= \frac{(4\pi)^{1-s}}{\G(s) C_{(s)}(m,k)} \mathscr L_s \left(\mathfrak e_{(s)}((z,\sigma),y) \right)= \frac{(4\pi)^{1-s}(2\pi y)^{2s}}{\G(s) C_{(s)}(m,k)}\mathfrak e_{(-s)}((z,\sigma),y)\notag\\
&=\frac{y^{2s}}{4^s} \frac{|\G(-s)| C_{(-s)}(m,k)}{\G(s) C_{(s)}(m,k)} ((|z|^2+y^2)^2+16|\sigma|^2)^{-\frac{m + 2k + 2s}4}.\notag
\end{align}
On the other hand, by \eqref{C} we have
\begin{equation}\label{ratiocsmk}
\frac{|\G(-s)| C_{(-s)}(m,k)}{\G(s) C_{(s)}(m,k)} = 4^{3s} \frac{\G\left(\frac{m+2+2s}{4}\right) \G\left(\frac{m + 2k +2s}{4}\right)}{ \G\left(\frac{m+2-2s}{4}\right) \G\left(\frac{m + 2k -2s}{4}\right)}.
\end{equation}
Substituting \eqref{ratiocsmk} in \eqref{transferch} we reach the sought for conclusion.

\end{proof}

In closing, we observe that it is clear from Corollary \ref{C:chsect} that, for any $y>0$, the function
\begin{equation}\label{uy}
u_y(z,\sigma)= \left(\frac{\G\left(\frac{m+2+2s}{4}\right)\G\left(\frac{m+2k+2s}{4}\right)}{\G\left(\frac{m+2-2s}{4}\right)\G\left(\frac{m+2k-2s}{4}\right)}\right)^{\frac{m + 2k - 2s}{4s}}\left(\frac{16y^2}{(|z|^2+y^2)^2+16|\sigma|^2}\right)^{\frac{m + 2k - 2s}4}
\end{equation}
satisfies $\mathscr L_s u_y = u_y^{\frac{m+2k+2s}{m+2k-2s}}$. It is worth emphasising here that Corollary \ref{C:chsect} is stable under the convergence of $s\nearrow 1$. In particular, we recover from \eqref{uy} the functions that, in the local case $s=1$, were found in \cite{JL} in the Heisenberg group $\Hn$, see also \cite[Theor. 1.1]{GVma} and \cite{GV} for partial results in groups of Heisenberg type.





\bibliographystyle{amsplain}

\end{document}